\newtheorem{theorem}{Theorem}[section]
\newtheorem{corollary}{Corollary}[section]
\newtheorem{lemma}{Lemma}[section]
\newtheorem{definition}{Definition}[section]
\newtheorem{assumption}{Assumption}[section]
\newcommand{\N}{\mathbb{N}}
\newcommand{\R}{\mathbb{R}}
\newcommand{\C}{\mathbb{C}}
\newcommand{\eps}{\varepsilon}
\newcommand{\ep}{\varepsilon}
\begin{document}
\setlength{\parskip}{1mm}
\setlength{\oddsidemargin}{0.1in}
\setlength{\evensidemargin}{0.1in}
\lhead{}
\rhead{}

\begin{center}
{\bf \Large \noindent Near field imaging of small isotropic and extended anisotropic scatterers }\\
\vspace{0.25in}
Isaac Harris\footnote{Department of Mathematics, Texas A $\&$ M University, College Station, Texas 77843-3368, (Corresponding Author) E-mail: iharris@math.tamu.edu} 
and Scott Rome\footnote{Email: romescott@gmail.com } 
\end{center}

\begin{abstract}
\noindent In this paper, we consider two time-harmonic inverse scattering problems of reconstructing penetrable inhomogeneous obstacles from near field measurements. First we appeal to the Born approximation for reconstructing small isotropic scatterers via the MUSIC algorithm. Some numerical reconstructions using the MUSIC algorithm are provided for reconstructing the scatterer and piecewise constant refractive index using a Bayesian method. We then consider the reconstruction of an anisotropic extended scatterer by {a modified linear sampling method and the factorization method applied }to the near field operator. This provides a rigorous characterization of the support of the  anisotropic obstacle in terms of a range test derived from the measured data. Under appropriate assumptions on the material parameters, the derived factorization can be used to determine the support of the medium without a-priori knowledge of the material properties. 
\end{abstract}

{\bf \noindent Keywords}: inverse scattering, factorization method, MUSIC algorithm.

{\bf \noindent AMS subject classifications:} 35J05, 45Q05, 78A46

\section{Introduction}
Qualitative/Sampling methods have been used to solve multiple inverse problems such as parameter identification and shape reconstruction. These methods have been used to solve inverse boundary value problems for elliptic, parabolic and hyperbolic partial differential equation. See \cite{heatlsm} and \cite{wavelsm} for examples of the linear sampling methods applied to parabolic and hyperbolic equations, respectively. The vast available literature is a representative of the many directions that this research has taken (see \cite{p1}, \cite{coltonkress}, \cite{coyle}, \cite {kirschbook} and the references therein).

The factorization method is a qualitative method which develops a range test to determine the support of the unknown scatterer. In general, we have that 
$$\phi_z \in \text{Range} \big( N_{\sharp}^{1/2} \big) \iff z \, \, \text{is inside the obstacle}$$ 
where $\phi_z$ is known and $N_{\sharp}$ is a positive selfadjoint compact operator defined by the measurements. By appealing to the range test, we have that any two obstacles admitting the same data must be equal, proving uniqueness of the inverse problem. Since the support of the scatterer is connected to the range of a compact operator which is known, evaluating the indicator function derived from the factorization method amounts to applying PicardÕs criteria, which only requires the singular values and functions of a known operator. This makes the factorization method computationally cheap to implement and analytically rigorous.
{Using the characterization of the obstacle given by the factorization method we propose a modified linear sampling method as an alternative sampling algorithm for reconstructing the obstacle. The indicator function proposed for the modified linear sampling method can be shown to satisfy a similar analytically rigors characterization of the obstacle. 
}

In this paper, we consider two problems associated with inverse scattering. The first problem we consider is the reconstruction of small isotropic obstacles and the refractive index from near field data. To do so, we derive the Multiple-Signal-Classification (MUSIC) algorithm which gives an analytically rigorous method of reconstructing small obstacles. Once we have reconstructed the obstacles, we will give a few example of how one can reconstruct constant refractive indices using a Bayesian method. Next we consider the reconstructing an anisotropic obstacle from near field data via the factorization method
{and modified linear sampling method.
}
This problem has been considered in \cite{lsmaniso} and \cite{fmaniso} for far field measurements using the linear sampling and factorization methods, respectively. The factorization method for an inhomogeneous isotropic media with near field measurements was studied in \cite{nf-fm-isotropic}. We analyze the factorization method applied to the near field operator for an anisotropic obstacle to derive a simple numerical algorithm to reconstruct the scatterer. For the case of an anisotropic media, the unique determination of the support is optimal because a matrix-valued coefficient is in general not uniquely determined by the scattering data (see for e.g. \cite{uniqueness-aniso}). The analysis done in this paper is of a similar flavor to the work done in \cite{fmdefect} and \cite{fmshawn}.

\section{Recovering small {volume} isotropic scatterers and the refractive index}
\subsection{Scattering by small isotropic scatterers}
In this section, we formulate the direct time-harmonic  scattering problem in $\R^d$ for small isotropic scatterers. The scattering obstacle may be made up of multiple simply components, but is not necessarily simply connected. We are interested in using near field measurements for `small' obstacles embedded in a homogenous media. Let $u^i( \cdot \, ,y)$ be the incident field that originates from the source point $y$ located on the curves $C$. We will assume that the curve $C$ is class $\mathcal{C}^2$ and that the incident field satisfies the Helmholtz equation in $\R^d \setminus\{y\}$ for $d=2$ or $3$. Now let 
$$D= \bigcup\limits_{m=1}^{M} D_m \quad \textrm{ with } \quad D_m=\big(z_m + \ep B_m \big),$$
where $\ep >0$ is a small parameter and $B_m$ is a domain with a piece-wise smooth boundary that is centered at the origin. Outside of the scattering obstacle(s) $D_m$, the material parameters are homogeneous isotropic with refractive index scaled to one. Here we denote $n(x)$ the refractive index of the homogenous background with the (possibly inhomogeneous) obstacle $D$ in ${\mathbb R}^d$ by 
 $$n(x)= 1 \big(1-\chi_D \big)+\sum\limits_{m=1}^{M} n_m (x) \chi_{D_m} $$
 where $\chi$ is the characteristic function.
The refractive index of the obstacle(s) $D_m$ is represented by a continuous scalar function $n_m$ such that the real and imaginary parts satisfy
 $$\inf\limits_{m=1 \cdots M} \Re \big( n_m (x) \big)  \geq n_{min}>0  \quad \text{and}  \quad \Im \left( n_m \right) \geq 0$$
 for almost every $x \in D_m$ where we assume that $n_{min}>1$ or $n_{min}<1$ . Now the radiating scattered field $u^s( \cdot \, , y) \in H^1_{loc} (\R^d)$ given by the point source incident field $u^i( \cdot \, , y)=\Phi( \cdot \, , y)$ is the unique solution to
 \begin{eqnarray}
&&\Delta_x u^s +k^2 n u^s=k^2(1-n)u^i \quad   \textrm{ in } \quad \R^d \label{scalarprob} \\
&&{\partial_r u^s} -iku^s =\mathcal{O} \left( \frac{1}{ r^{(d+1)/2} }\right) \quad \text{ as } \quad r \rightarrow \infty \label{src1}
\end{eqnarray} 
where $r=|x|$ and the radiation condition \eqref{src1} is satisfied uniformly in all directions $\hat{x} = x/|x|$. 
Recall the fundamental solution to Helmholtz equation 
$$\Phi(x,y)= \left\{\begin{array}{lr} \frac{\text{i}}{4} H^{(1)}_0 (k | x-y |) \, \, & \, \text{in} \, \, \R^2 \\
 				&  \\
 \frac{1}{4 \pi} \frac{\text{exp}({ \text{i} k | x-y |}) }{| x-y |}  & \,  \text{in} \,\,   \R^3 ,
 \end{array} \right. $$  
where $H^{(1)}_0$ is the first kind Hankel function of order zero. 
The corresponding total field for the scattering problem is given by $u( \cdot \, , y)=u^s( \cdot \, , y)+u^i( \cdot \, , y)$  satisfies 
$$u^{(-)}=u^{(+)} \quad \textrm{ and } \quad \frac{\partial u^{(-)} }{\partial \nu} =\frac{\partial u^{(+)} }{\partial \nu} \quad \textrm{ on } \partial D_m,$$
where  the superscripts $+$ and $-$ for a generic function indicates the trace on the boundary taken from the exterior or interior of its surrounding domain, respectively. We have that the scattered field is given by the solution to the Lippmann-Schwinger Integral Equation (see for e.g. \cite{coltonkress})
$$u^s(x,y) = k^2 \int_{D} \big( n(z) - 1\big)\Phi(x,z) u(z,y) \, dz.$$
Assume that $\ep$ is sufficiently small such that  
$$ k^2 \left| \int_{D} \big( n(z) - 1\big)\Phi(x,z) \, dz \right| \ll 1. $$
Therefore, we can conclude that the Born approximation, which is the first term in the Neumann series solution to the Lippmann-Schwinger Integral Equation, given by 
\begin{eqnarray}
u_B^s(x,y) = k^2 \int_{D} \big( n(z) - 1\big)\Phi(x,z) u^i  (z,y)  \, dz \label{born}
\end{eqnarray}
is a `good' approximation for the scattered field $u^s(x,y)$. Therefore, we assume that we have the measured near field data that is given by the Born approximation $u_B^s(x,y)$. Let $\Omega$ be the bounded simply connected open set such that satisfies $\overline{D} \subset \Omega$ and $C=\partial \Omega$. Now assume that we have the data set of near field measurements $u_B^s(x,y)$ for all $x,y \in C$. The {\it inverse problem} we consider is to reconstruct the scatterer $D$ and the refractive index $n$ from a knowledge of the Born approximation of the near field measurements.

\subsection{The MUSIC Algorithm}
The MUSIC Algorithm can be considered as the discrete analogue of the factorization method (see for e.g. \cite{MUSIC-LSM}). In particular, we connect the locations of the obstacles $D_m$ given by $\{ z_m  \, : \, m=1, \dots , M\}$ to the range of the so-called multi-static response matrix denoted by ${\bf N}$ that is defined by the near field measurements $u_B^s(x,y)$ for all $x,y \in C$.  To arrive at the MUSIC algorithm, we will need to define our measurements operator. To this end, we let the incident field be give by $u^i(x,y)=\Phi(x,y)$. Notice that by \eqref{born} we obtain that 
\begin{eqnarray*}
&& u_B^s(x,y) = k^2 \int_{D} \big( n(z) - 1\big) \Phi(x,z) \Phi(z,y)  \, dz \\
&& \qquad \qquad =\sum\limits_{m=1}^M k^2  \big( n(z_m) - 1 \big) \ep^d |B_m| \Phi(x, z_m) \Phi(z_m ,y) +o(\ep^d)
\end{eqnarray*}
since $x , y \in C$ the integrand is continuous with respect to the variable $z \in D$. Assume we have a finite number of  incident and observation directions where $N \geq M$, with $M$ being the number of small obstacles and $N$ being the number of incident and observation directions given by $x_i \, ,y_j \in C$. Now define the multi-static response matrix 
$${\bf N}_{i,j} = \sum\limits_{m=1}^M k^2 \big( n(z_m) - 1 \big) \ep^d |B_m| \Phi(x_i , z_m) \Phi(z_m ,y _j).$$
Notice that for $\ep$ sufficiently small we have that the multi-static response matrix is approximated by $u^s_B(x_i\, ,y_j)$ therefore we assume that ${\bf N}$ is known. For convenience, we will assume that the sources and receivers are placed at the same points (i.e. $x_i=y_i$). We now factorize the multi-static response matrix, therefore we define the matrices ${\bf U} \in \C^{N \times M}$ and ${\bf \Sigma} \in \C^{M \times M}$, where the matrix ${\bf U}$ is given by
$${\bf U}_{i, m}=   \Phi( x_i , z_m) $$
and the matrix ${\bf \Sigma} =\text{diag}(\sigma_m)$ where $\sigma_m=k^2 \big( n(z_m) - 1 \big) \ep^d |B_m|\neq 0$. Therefore, we have that ${\bf N}={\bf U}{\bf \Sigma}{\bf U}^{\top}$ and it follows that 
\begin{equation}
{\bf N}{\bf N}^*={\bf U} {\widetilde{\bf \Sigma }} {\bf U}^* \quad \text{ with } \quad {\widetilde{\bf \Sigma}}={\bf \Sigma}{\bf U}^{\top} \, \overline{{\bf U}} {\bf \Sigma}^* . \label{msmfactor}
\end{equation}
Now define the vector ${\bf \phi}_z \in \C^N$ for any point $z \in \R^d$ by 
\begin{eqnarray}
{\bf \phi}_z= \big(\, \Phi(x_1 , z) ,\,  \dots , \Phi(x_N , z) \, \big)^{\top}. \label{vecphi}
\end{eqnarray}
The goal of this section is to show that ${\bf \phi}_z$ is in the range of the matrix $ {\bf N}{\bf N}^*$ if and only if $z \in \{ z_m \, : \, m=1, \dots , M \}$, which is a discrete reformulation of the result of the factorization method stated in the Introduction. Since we are interested in finding obstacles $D_m$, it is sufficient to prove the result only for values of $z \in \Omega$. Recall that without loss of generality, we assume that the sources and receivers are placed at the same locations. We will follow the analytic framework in Section 4.1 of \cite{kirschbook}. We now give a result that can be proven by using standard arguments from linear algebra (see proof of Theorem 3.1 in \cite{elasticmusic} for details). 

\begin{lemma} \label{m672}
Let the matrix ${\bf N}$ have the following factorization ${\bf N}={\bf U}{\bf \Sigma}{\bf U}^{\top}$ where ${\bf U} \in \C^{N \times M}$ and ${\bf \Sigma} \in \C^{M \times M}$ with $N>M$. Assume that 
\begin{enumerate}
\item the matrix ${\bf U}$ has full rank $M$ 
\item the matrix ${\bf \Sigma}$ is invertible 
\end{enumerate}
then Range$\big({\bf U} \big)=$Range$\big( {\bf N}{\bf N}^* \big)$. 
\end{lemma}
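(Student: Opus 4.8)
The plan is to prove the two range identities $\text{Range}({\bf U}) = \text{Range}({\bf N}{\bf N}^*)$ by a chain of inclusions, exploiting the explicit factorization ${\bf N}{\bf N}^* = {\bf U}\widetilde{\bf \Sigma}{\bf U}^*$ from \eqref{msmfactor} together with the two hypotheses. The key structural fact I would isolate first is that $\widetilde{\bf \Sigma} = {\bf \Sigma}{\bf U}^\top\overline{\bf U}{\bf \Sigma}^*$ is an invertible $M\times M$ matrix: indeed ${\bf \Sigma}$ is invertible by assumption~2, and ${\bf U}^\top\overline{\bf U} = \overline{{\bf U}^*{\bf U}}$ where ${\bf U}^*{\bf U}$ is the Gram matrix of the columns of ${\bf U}$, which is invertible precisely because ${\bf U}$ has full column rank $M$ by assumption~1. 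Hence $\widetilde{\bf \Sigma}$ is a product of three invertible $M\times M$ matrices and is therefore invertible.

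Next I would establish $\text{Range}({\bf N}{\bf N}^*) \subseteq \text{Range}({\bf U})$, which is immediate: every vector ${\bf N}{\bf N}^*{\bf x} = {\bf U}(\widetilde{\bf \Sigma}{\bf U}^*{\bf x})$ is of the form ${\bf U}{\bf y}$ with ${\bf y} = \widetilde{\bf \Sigma}{\bf U}^*{\bf x} \in \C^M$. For the reverse inclusion $\text{Range}({\bf U}) \subseteq \text{Range}({\bf N}{\bf N}^*)$, take an arbitrary ${\bf U}{\bf y}$ with ${\bf y}\in\C^M$; I want to produce ${\bf x}\in\C^N$ with ${\bf U}\widetilde{\bf \Sigma}{\bf U}^*{\bf x} = {\bf U}{\bf y}$. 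Since ${\bf U}$ has full column rank, ${\bf U}$ is injective, so it suffices to solve $\widetilde{\bf \Sigma}{\bf U}^*{\bf x} = {\bf y}$; using invertibility of $\widetilde{\bf \Sigma}$ this reduces to ${\bf U}^*{\bf x} = \widetilde{\bf \Sigma}^{-1}{\bf y}$, and since ${\bf U}$ has full column rank, ${\bf U}^*:\C^N\to\C^M$ is surjective, so such an ${\bf x}$ exists. Combining the two inclusions gives the claimed equality.

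Since this is a finite-dimensional linear-algebra statement, there is no real analytic obstacle; the one point that requires a moment of care is the identification $\text{rank}({\bf U}) = M \iff {\bf U}^*{\bf U}$ invertible $\iff {\bf U}^\top\overline{\bf U}$ invertible (conjugating a matrix does not change invertibility), and keeping track that all the transpose-versus-conjugate-transpose bookkeeping is consistent with the way \eqref{msmfactor} was written. Everything else is a routine surjectivity/injectivity argument, which is why the paper defers it to the cited references. I would present the proof in the order above: (i) $\widetilde{\bf \Sigma}$ invertible, (ii) easy inclusion, (iii) hard inclusion via surjectivity of ${\bf U}^*$ and injectivity of ${\bf U}$.
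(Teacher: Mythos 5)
Your argument is correct: the invertibility of $\widetilde{\bf \Sigma}={\bf \Sigma}{\bf U}^{\top}\overline{{\bf U}}{\bf \Sigma}^*$ via the Gram matrix ${\bf U}^*{\bf U}$, the easy inclusion from the factorization ${\bf N}{\bf N}^*={\bf U}\widetilde{\bf \Sigma}{\bf U}^*$, and the reverse inclusion using injectivity of ${\bf U}$ and surjectivity of ${\bf U}^*$ all go through without gaps. The paper does not print a proof at all, deferring to ``standard arguments from linear algebra'' in the cited reference (Theorem 3.1 of \cite{elasticmusic}), and your proposal is exactly that standard argument, so it matches the intended route.
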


We now construct an indicator function derived from Lemma \ref{m672} to reconstruct the locations of the scattering obstacles. To this end, for each sampling point $z \in \Omega$ we will show that the vector $ {\bf \phi}_z$ is in the range of $ {\bf N}{\bf N}^*$ if and only if  $z \in \{ z_m \, : \, m=1, \dots , M\}$. We now prove an auxiliary result that connects the location of the scattering obstacles to the range of the matrix ${\bf U}$.

\begin{theorem} \label{music1}
Assume that the set $S=\{ {x}_i  \in C\, : \, i \in \N \}$ is dense in $C$. Let $z \in \Omega$ then there is a number $N_0 \in \N$ such that for all $N \geq N_0$,  we have that 
\begin{enumerate}
\item the matrix ${\bf U}$ has full rank,
\item ${\bf \phi}_z \in $ Range$\big({\bf U} \big)$ if and only if $z \in \{ z_m \, : \, m=1, \dots , M\}$.
\end{enumerate}
\end{theorem}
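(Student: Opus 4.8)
The plan is to reduce both assertions to one auxiliary fact: for any finite collection of \emph{distinct} points $w_1,\dots,w_L\in\Omega$, the $N\times L$ matrix $\big[\Phi(x_i,w_\ell)\big]_{1\le i\le N,\,1\le \ell\le L}$ has full column rank $L$ once $N$ is large enough. Granting this, part 1 is exactly the case $L=M$, $w_\ell=z_\ell$. For part 2, the implication $z\in\{z_m\}\Rightarrow {\bf \phi}_z\in\mathrm{Range}({\bf U})$ is immediate, since then ${\bf \phi}_z$ is literally a column of ${\bf U}$. Conversely, if $z\notin\{z_m\}$ then $z,z_1,\dots,z_M$ are $M+1$ distinct points of $\Omega$, so for $N$ large the augmented matrix $[\,{\bf U}\mid{\bf \phi}_z\,]\in\C^{N\times(M+1)}$ has rank $M+1>M=\mathrm{rank}({\bf U})$, which forces ${\bf \phi}_z\notin\mathrm{Range}({\bf U})$. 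Taking $N_0$ to be the larger of the two thresholds produced this way gives the theorem.

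To prove the auxiliary fact, I would first record the linear-algebra bookkeeping. Enumerate $S=\{x_i:i\in\N\}$; the rank $r_N$ of the matrix built from $x_1,\dots,x_N$ is nondecreasing in $N$ and bounded by $L$, hence stabilizes at some $r^\ast$ for all $N\ge N_0$. The corresponding kernels $K_N=\{\alpha\in\C^L:\sum_{\ell}\alpha_\ell\Phi(x_i,w_\ell)=0,\ i=1,\dots,N\}$ are nested and, for $N\ge N_0$, all of dimension $L-r^\ast$, so $K_N=:K$ is a fixed subspace; any $\alpha\in K$ satisfies $\sum_{\ell=1}^{L}\alpha_\ell\Phi(x_i,w_\ell)=0$ for \emph{every} $x_i\in S$. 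Suppose, for contradiction, that $r^\ast<L$ and pick $0\ne\alpha\in K$. The function $v(x):=\sum_{\ell}\alpha_\ell\Phi(x,w_\ell)$ is continuous on $C$ because each $w_\ell\in\Omega$ lies off $C$, and since $S$ is dense in $C$ we get $v=0$ on all of $C$.

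The essential step is then a uniqueness argument. Now $v$ is a radiating solution of the Helmholtz equation in $\R^d\setminus\{w_1,\dots,w_L\}$; in the exterior domain $\R^d\setminus\overline{\Omega}$ it solves the exterior Dirichlet problem with zero boundary data, so by uniqueness for that problem (valid for every $k>0$ via Rellich's lemma and unique continuation) $v\equiv0$ in $\R^d\setminus\overline{\Omega}$. Since $C=\partial\Omega$ is $\mathcal{C}^2$ and $\Omega$ is bounded and simply connected, both $\R^d\setminus\overline{\Omega}$ and $\R^d\setminus\{w_1,\dots,w_L\}$ are connected, so real-analyticity of solutions of the Helmholtz equation propagates $v\equiv0$ to all of $\R^d\setminus\{w_1,\dots,w_L\}$. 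But near any $w_\ell$ with $\alpha_\ell\ne0$ we have $v(x)=\alpha_\ell\Phi(x,w_\ell)+g(x)$ with $g$ bounded near $w_\ell$ and $|\Phi(x,w_\ell)|\to\infty$ as $x\to w_\ell$, so $v$ cannot vanish identically there. This contradiction shows every $\alpha_\ell=0$, i.e. $K=\{0\}$ and $r^\ast=L$, proving the claim.

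The main obstacle is this last step — converting a finite linear dependence among the columns into a global radiating solution that vanishes on $\partial\Omega$, and then combining exterior uniqueness with the singularity of $\Phi(\cdot,w_\ell)$ to kill the coefficients; the rank/kernel monotonicity is routine. I would present the full-rank statement as a standalone sublemma and then invoke it twice, mirroring the analytic framework of Section 4.1 of \cite{kirschbook} and the MUSIC-type arguments in \cite{elasticmusic}.
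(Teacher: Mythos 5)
Your proposal is correct and follows essentially the same route as the paper: reduce to linear independence of the traces $\Phi(\cdot,w_\ell)\big|_C$, use density of $S$ to get vanishing on $C$, then exterior Dirichlet uniqueness plus unique continuation and the singularity of $\Phi(\cdot,w_\ell)$ to force all coefficients to vanish. The only difference is bookkeeping: you extract the large-$N$ statement via nested-kernel/rank stabilization and a standalone full-column-rank sublemma applied with $L=M$ and $L=M+1$, whereas the paper normalizes the coefficients and passes to a convergent subsequence — both are sound.
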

\begin{proof}
 It is clear that ${\bf \phi}_{z_m}$ is in the range of ${\bf U}$ since ${\bf \phi}_{z_m}$ is the $m$-th column of ${\bf U}.$ To prove that for $z \in \Omega \setminus  \{ z_m \, : \, m=1, \dots , M\}$ that ${\bf \phi}_{z}$ is not in the range of ${\bf U}$ for some $N$ sufficiently large, we proceed by way of contradiction. Let $z \in \Omega \setminus  \{ z_m \, : \, m=1, \dots , M\}$ and assume that there does not exist such a $N_0$, then we have that there exists $\alpha_m^N$, $\alpha^N \in \C$ such that 
\begin{equation}
 \big| \alpha^N \big|+\sum\limits_{m=1}^M \left| \alpha_m^N \right| =1 \label{equal1}
 \end{equation}
and 
$$ \alpha^N \Phi({ x}_i , z )+\sum\limits_{m=1}^M  \alpha_m^N \Phi({ x}_i , z_m) =0 \quad \text{for all} \quad 1 \leq i \leq N.$$
We then conclude that (up to a subsequence) $\alpha_m^N$, $\alpha^N  \rightarrow \alpha_m$, $\alpha$ as $N \rightarrow \infty$. This gives that due to the density of $S$ and since $\Phi({ x}, z )$ is analytic with respect to $x$ for $z \neq x$, we obtain that 
\begin{equation*}
\alpha \Phi({x}, z )+\sum\limits_{m=1}^M  \alpha_m \Phi({x}, z_m ) =0 \quad \text{for all} \quad {x} \in C.
\end{equation*}
Since the mapping 
\begin{equation*}
x \, \longmapsto \, \alpha \, \Phi({x}, z )+\sum\limits_{m=1}^M  \alpha_m \Phi({x}, z_m )
\end{equation*}
is a radiation exterior solution to the Helmholtz equation in $\R^d \setminus \overline{\Omega}$ we conclude that
$$ \alpha \,  \Phi({x}, z )+\sum\limits_{m=1}^M  \alpha_m \Phi({x}, z_m ) =0  \quad \text{for all} \quad { x} \in \R^d \setminus \{z \}  \cup \{ z_m \, : \, m=1, \dots , M\} $$
by uniqueness to the exterior Dirichlet problem and unique continuation. Now letting $x \rightarrow z, z_m$ gives that $\alpha_m=\alpha=0$ for $1 \leq m \leq M$, but by \eqref{equal1} and the convergence of the coefficients $\alpha_m^N$, $\alpha^N$ as $N \rightarrow \infty$ we have that 
$$ |\alpha|+\sum\limits_{m=1}^M | \alpha_m | =1$$
which gives a contradiction. Moreover, the fact that ${\bf U}$ has full rank is a consequence of the given arguments. 
\end{proof}

Now by combining this with Lemma \ref{m672} and Theorem \ref{music1} we have the following range test which can be used to reconstruct the set $z \in \{ z_m \, : \, m=1, \dots , M\}.$

\begin{theorem} \label{music2}
Assume that the set  $S=\{ {x}_i  \in C\, : \, i \in \N \}$ is dense in $C$.  If $z \in \Omega$ then there is a number $N_0 \in \N$ such that for all $N \geq N_0$  
$${\bf \phi}_z \in \text{Range}\big({\bf N}{\bf N}^* \big)\quad  \text{ if and only if } \quad z \in \{ z_m \, : \, m=1, \dots , M\}.$$ 
\end{theorem}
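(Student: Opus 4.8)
The plan is to obtain the statement by stitching together Lemma \ref{m672} and Theorem \ref{music1}; essentially all of the analytic content has already been established in the proof of Theorem \ref{music1}, so what remains is a short algebraic assembly. First I would recall that the construction of the previous subsection furnishes the symmetric factorization ${\bf N}={\bf U}{\bf \Sigma}{\bf U}^{\top}$ with ${\bf U}\in\C^{N\times M}$ given by ${\bf U}_{i,m}=\Phi(x_i,z_m)$ and ${\bf \Sigma}=\mathrm{diag}(\sigma_m)\in\C^{M\times M}$ with $\sigma_m=k^2\big(n(z_m)-1\big)\ep^d|B_m|$. By the standing assumptions on the refractive index (so that $n(z_m)\neq 1$), each $\sigma_m\neq 0$, hence ${\bf \Sigma}$ is invertible; this verifies the second hypothesis of Lemma \ref{m672} for every admissible $N$.

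Next I would fix $z\in\Omega$ and invoke Theorem \ref{music1}, which supplies a threshold $N_0\in\N$ (depending on $z$) such that for all $N\geq N_0$ the matrix ${\bf U}$ has full rank $M$; after possibly enlarging $N_0$ we may also assume $N_0>M$, so that both the dimensional condition $N>M$ and the first hypothesis of Lemma \ref{m672} hold for $N\geq N_0$. Applying Lemma \ref{m672} then yields the range identity $\text{Range}\big({\bf U}\big)=\text{Range}\big({\bf N}{\bf N}^*\big)$ for every $N\geq N_0$.

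Finally I would combine this identity with the second conclusion of Theorem \ref{music1}: for $N\geq N_0$ one has ${\bf \phi}_z\in\text{Range}\big({\bf U}\big)$ if and only if $z\in\{z_m:m=1,\dots,M\}$. Substituting $\text{Range}\big({\bf U}\big)=\text{Range}\big({\bf N}{\bf N}^*\big)$ gives ${\bf \phi}_z\in\text{Range}\big({\bf N}{\bf N}^*\big)$ if and only if $z\in\{z_m:m=1,\dots,M\}$, which is the assertion. There is no genuine obstacle here — the technical heart (full rank of ${\bf U}$ via density of $S$, analyticity of $\Phi(\cdot,z)$, unique continuation, and uniqueness for the exterior Dirichlet problem) was already dispatched in the proof of Theorem \ref{music1}. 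The only points worth flagging explicitly are that a single threshold $N_0$ serves simultaneously for the full-rank property, for the applicability of Lemma \ref{m672}, and for the range characterization (which is exactly what Theorem \ref{music1} provides), and that $N_0$ depends on the sampling point $z$, so the characterization is pointwise in $z$, precisely as stated.
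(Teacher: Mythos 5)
Your proposal is correct and follows exactly the route the paper intends: the paper states Theorem \ref{music2} as an immediate consequence of combining Lemma \ref{m672} with Theorem \ref{music1}, which is precisely your assembly (with the welcome extra care of checking that ${\bf \Sigma}$ is invertible and that $N_0$ can be taken larger than $M$ so the hypothesis $N>M$ of the lemma holds). No gaps; this matches the paper's argument.
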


\noindent Now we are ready to construct an indicator function which characterizes the locations of the small obstacles. To this end, let ${\bf P}: \C^N \mapsto \text{Null}\big({\bf N}{\bf N}^* \big)$ be the orthogonal projection onto $\text{Null}\big({\bf N}{\bf N}^* \big)$. Therefore, by Theorem \ref{music2} we have that 
$${\bf P} {\bf \phi}_{z}=0 \quad \text{ if and only if } \quad z \in \{ z_m \, : \, m=1, \dots , M\}.$$ 
Notice that since ${\bf N}{\bf N}^*$ is a self-adjoint matrix we have that it is orthogonally diagonalizable. So we let ${\bf w}_j$ be the $j$-th orthonormal eigenvector of ${\bf N}{\bf N}^*$, we also let $r$=Rank$\big( {\bf N}{\bf N}^* \big)$. This gives that ${\bf w}_j$ for $r+1 \leq j \leq N$ is an orthonormal basis for $\text{Null}\big({\bf N}{\bf N}^* \big)$, and therefore we have that ${\bf P}$ is given by 
$${\bf P} {\bf \phi}_{z} =\sum\limits_{j=r+1}^N \big( {\bf \phi}_{z} , {\bf w}_j \big)_{\ell^2} {\bf w}_j.$$
This leads to the following result:

\begin{lemma} \label{music3}
Let ${\bf w}_j$ be the $j$-th orthonormal eigenvector of ${\bf N}{\bf N}^*$ and let $r$=Rank$\big( {\bf N}{\bf N}^* \big)$. Assume that the set $S=\{ {x}_i  \in C\, : \, i \in \N \}$ is dense in $C$. If $z \in \Omega$ then there is a number $N_0 \in \N$ such that for all $N \geq N_0$
\begin{equation}
\mathcal{I}(z)= \left[ \sum\limits_{j=r+1}^N \left|\big( {\bf \phi}_{z} , {\bf w}_j \big) \right|_{\ell^2}^2 \right]^{-1} < \infty \quad  \text{ if and only if } \quad z \in \{ z_m \, : \, m=1, \dots , M\}. \label{musicfunc}
\end{equation}
\end{lemma}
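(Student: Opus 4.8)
The plan is to read the claim off Theorem~\ref{music2} together with the spectral decomposition of ${\bf N}{\bf N}^*$; no new analysis is needed beyond what was done for the range test. First I would record that, since ${\bf N}{\bf N}^*$ is self-adjoint, its orthonormal eigenvectors ${\bf w}_1,\dots,{\bf w}_N$ form a basis of $\C^N$, and the vectors ${\bf w}_{r+1},\dots,{\bf w}_N$ with $r=\text{Rank}({\bf N}{\bf N}^*)$ span $\text{Null}({\bf N}{\bf N}^*)$, which is the orthogonal complement of $\text{Range}({\bf N}{\bf N}^*)$ in $\C^N$. Consequently the orthogonal projection ${\bf P}$ onto the null space is ${\bf P}{\bf \phi}_z=\sum_{j=r+1}^N ({\bf \phi}_z,{\bf w}_j)_{\ell^2}{\bf w}_j$, and by Parseval's identity $\|{\bf P}{\bf \phi}_z\|_{\ell^2}^2=\sum_{j=r+1}^N |({\bf \phi}_z,{\bf w}_j)_{\ell^2}|^2$, so the bracketed sum in \eqref{musicfunc} is exactly $\|{\bf P}{\bf \phi}_z\|_{\ell^2}^2$ and $\mathcal{I}(z)=\|{\bf P}{\bf \phi}_z\|_{\ell^2}^{-2}$.

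Next I would apply Theorem~\ref{music2}: for $z\in\Omega$ there is $N_0$ such that for all $N\ge N_0$ one has ${\bf \phi}_z\in\text{Range}({\bf N}{\bf N}^*)$ if and only if $z\in\{z_m:m=1,\dots,M\}$. Since $\text{Range}({\bf N}{\bf N}^*)$ and $\text{Null}({\bf N}{\bf N}^*)$ are orthogonal complements, ${\bf \phi}_z\in\text{Range}({\bf N}{\bf N}^*)$ is equivalent to ${\bf P}{\bf \phi}_z=0$, i.e. to the vanishing of $\sum_{j=r+1}^N |({\bf \phi}_z,{\bf w}_j)_{\ell^2}|^2$. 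Combining the two statements, the denominator of $\mathcal{I}(z)$ vanishes precisely when $z$ is one of the obstacle centers $z_m$, which is exactly the range-test characterization recorded in \eqref{musicfunc}; the threshold $N_0$ is the one already furnished by Theorem~\ref{music2}.

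There is essentially no genuine obstacle in this last step: all the analytic content — density of $S$ in $C$, analyticity of $\Phi(\cdot,z)$, unique continuation for the exterior Helmholtz problem, and the full-rank property of ${\bf U}$ — was already used to prove Theorems~\ref{music1} and~\ref{music2}, so Lemma~\ref{music3} is a direct corollary. The only points needing a little care are the use of self-adjointness to identify $\text{Null}({\bf N}{\bf N}^*)$ with $\text{Range}({\bf N}{\bf N}^*)^{\perp}$, so that "${\bf \phi}_z$ not in the range" coincides with "${\bf P}{\bf \phi}_z\neq 0$", and the fact that the constant $N_0$ carries over unchanged. In an actual computation $\mathcal{I}(z)$ is never literally infinite because of noise and truncation, so in practice this result says that $\mathcal{I}$ develops sharp peaks at the points $z_m$, which is what makes it a usable imaging functional.
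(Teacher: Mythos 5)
Your proof follows exactly the route the paper itself takes (the lemma is stated there as an immediate consequence of the preceding discussion): identify the bracketed sum with $\|{\bf P}{\bf \phi}_z\|_{\ell^2}^2$ using the spectral decomposition of the self-adjoint matrix ${\bf N}{\bf N}^*$, note that $\text{Null}({\bf N}{\bf N}^*)=\text{Range}({\bf N}{\bf N}^*)^{\perp}$, and convert Theorem~\ref{music2} into the statement that this quantity vanishes precisely at the points $z_m$, with the same threshold $N_0$. One caveat: what your argument (and the paper's) actually yields is that $\mathcal{I}(z)=\infty$ exactly when $z\in\{z_m\}$ and $\mathcal{I}(z)<\infty$ otherwise --- the indicator peaks on the scatterers, which is also how it is used numerically --- so the literal ``$<\infty$ if and only if $z\in\{z_m\}$'' in \eqref{musicfunc} is oriented the wrong way (a slip present in the paper's own statement), and you should state the conclusion as ``the sum $\sum_{j=r+1}^N |({\bf \phi}_z,{\bf w}_j)_{\ell^2}|^2$ vanishes if and only if $z\in\{z_m\}$'' rather than asserting that your deduction matches the printed inequality verbatim.
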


\subsection{Numerical validation of the MUSIC algorithm}

We now present a few examples of reconstructing small obstacles using the MUSIC algorithm developed in this section in ${\mathbb R}^2$. 
To simulate the data, we use the Born approximation of the scattered field and a $2d$ Gaussian quadrature to approximate the volume integral. We now let the collection curve $C$ be the boundary of a disk of radius one. In the following calculations we use $32$ different sources and receivers at the points 
$$x_i=y_i=\big(\cos(2\pi(i-1)/32), \sin(2\pi(i-1)/32) \big)^{\top} \quad \text{for} \quad i=1, \dots, 32. $$ 
This leads to an approximated multi-static response matrix 
$${\bf N} \approx \big[ u^{s}_B({x}_i,y_j) \big]_{i,j=1}^{32} \quad \textrm{ such that } \quad u^{s}_B({x},y) = k^2 \sum_p \omega_p \big( n(z_p) - 1\big)\Phi(x,z_p) {\Phi(z_p,y)},$$
where $\omega_p$ and $z_p$ are the weights and quadrature points for the numerical approximation of the volume integral by a  2$d$ Gaussian quadrature method. We give examples with random noise added to the simulated data for $u^{s}_B ({x}_i,y_j)$. The random noise level is given by $\delta$ where the noise is added to the multi-static response matrix such that 
$$\Big[u^{s}_B ({x}_i,y_j) \big( 1 +\delta E_{i,j} \big) \Big]_{i,j=1}^{32} \quad \text{with the random matrix} \quad E \quad \text{satisfying } \quad \| E \|_2 =1.$$
The set $\{ z_m \, : \, m=1, \dots , M\}$ can be visualized by plotting the corresponding indicator function given in \eqref{musicfunc}
$$ \mathcal{I}(z)= \left[ \sum\limits_{j=r+1}^{32}\left|\big( {\bf \phi}_{z} , {\bf w}_j \big) \right|_{\ell^2}^2 \right]^{-1} $$
where $ {\bf w}_j  \in  \C^{32}$ are the eigenvectors of the approximated matrix  ${\bf N}{\bf N}^*$ and $r$=Rank$\big( {\bf N}{\bf N}^* \big)$ is computed by the Matlab command rank. In the following examples, we fix the wave number $k=1$. 

\begin{figure}[H]
\hspace{-0.75in}\includegraphics[scale=0.45]{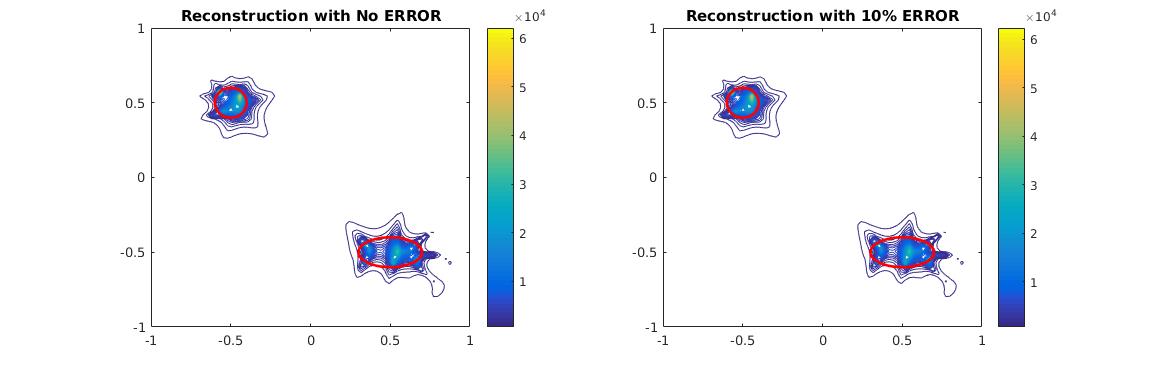}
\caption{ Reconstruction of the location of the two scatterers, the circle of radius 0.2 centered at $(-0.5, 0.5)$ and the ellipse with $a=0.2$ and $b=0.1$ centered at $(0.5, -0.5)$. The refractive index $n=5$ in both scatterers. Contour plot of the indicator function $z \mapsto \mathcal{I}(z)$ where the red lines are the boundary of the scatterer. }
\end{figure}

\begin{figure}[H]
\hspace{-0.7in}\includegraphics[scale=0.53]{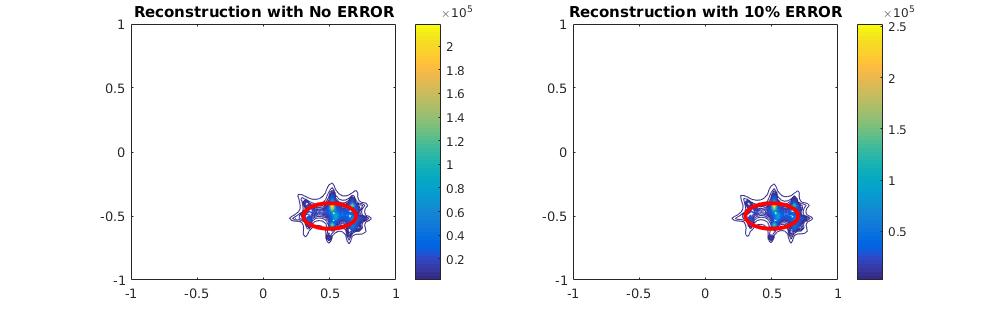}
\caption{ Reconstruction of the location of one scatterer, the ellipse with $a=0.2$ and $b=0.1$ centered at $(0.5, -0.5)$. The refractive index $n=2+\text{i}$ in the scatterer. Contour plot of the indicator function $z \mapsto \mathcal{I}(z)$ where the red lines are the boundary of the scatterer. }
\end{figure}

\begin{figure}[H]
\hspace{-0.7in}\includegraphics[scale=0.48]{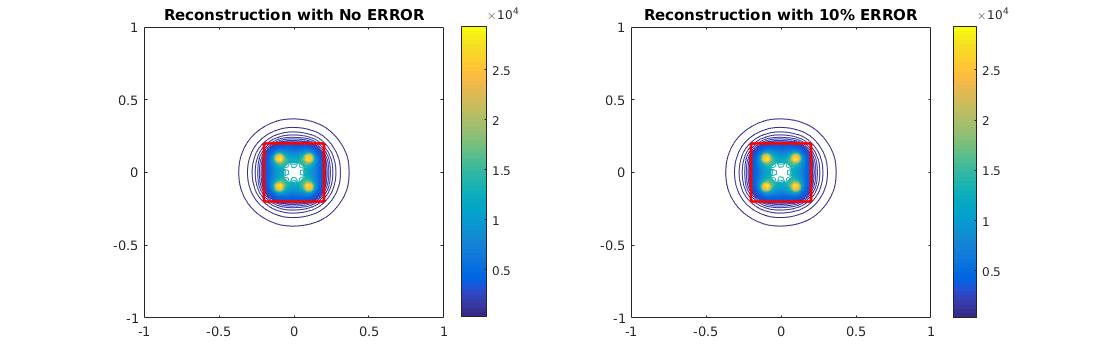}
\caption{ Reconstruction of the location of the a square scatterer, $[-0.2 , 0.2]^2$. The refractive index $n(x_1 , x_2 )=x_1^2+2$ in the scatterer. Contour plot of the indicator function $z \mapsto \mathcal{I}(z)$ where the red lines are the boundary of the scatterer. {\it Remark:} Even though both figures look identical, different data we used in the reconstructions.  }
\end{figure}


{

\subsection{Bayesian approach to approximating the Refractive Index}

In this section, we consider the the second part of our {\it inverse problem} which is to reconstruct (i.e. approximate) the refractive index of the scatterer(s). In particular, the idea is to use a hierarchical Bayesian model for estimating the refractive index by a constant in the scatterer(s) which has been reconstructed, possibly by the MUSIC algorithm proposed in the previous section. Since we assume that $n(x)$ is continuous in the scatterer the small size of the obstacle implies that there will be only small deviations from the value at the center of the obstacle.

Here we assume that we that an approximation of the scatterer is known using the reconstruction from the previous section or by another qualitative method. We use a Bayesian collocation method to approximate the value of the refractive index at the center of the obstacle. For this discussion, assume that the obstacle is centered at the point $x_0$. Rather than solving for the value of the unknown $n(x_0)$ exactly, we solve for a probability distribution of the possible values of $n(x_0)$. To do this, we must assume a prior distribution for possible values of $n(x_0)$ and a data model for the scattered wave, which we will discuss below. Then, Markov chain Monte Carlo methods allow us to sample from the posterior distribution of $n(x_0)$ given the scattered wave and the prior distribution. The mode of the resulting posterior distribution (called the Maximum a posteriori estimator or MAP). The MAP in certain settings can be shown to be equivalent to the standard linear regression estimator or the frequentist statistics maximum likelihood estimator. In this context, our approach provides more information on possible values of $n(x_0)$, and because the scatterer is small with a continuous refractive index. 

To proceed, we define the necessary notation. Assume we have a reconstruction of $D$ via the MUSIC algorithm, denoted $\widehat D$. 
Furthermore, we assume that the measured scattered field denoted $U^s_B$ is given by the Born approximation on $D$, that is
\begin{equation*}
U^s_B (x,y) =k^2 \int_{\widehat D} \big( n(z) - 1\big)\Phi(x,z) {\Phi(z,y)}  \, dz + \mathcal{O}(\delta)
\end{equation*}
where $\delta$ is the amount of error added for the reconstruction of $D$ and standard measurement error. We assume the error term is modeled by a normal distribution with zero mean and standard deviation $\delta$, then we may write the data model, as a statistical distribution where values of $U^s_B$ are drawn from
\begin{equation}
U^s_B  (x,y)  \sim k^2 \int_{ \widehat D} \big( n(z) - 1\big)\Phi(x,z) {\Phi(z,y)}  \, dz  + \mathcal{N}(0,\delta). 
\end{equation}
Now, recall that we have readings of the scattered field $U^s_B$ at points $x_i \in C$ with the location of the point source given by $y_i \in C$ with $C$ being the collection curve discussed in the previous section. This forms the data set $R=\big\{(x_i,y_i,U^s_B(x_i,y_i))\big\}_{i=1}^{N}$ where $N$ indicates the number of sources and receivers.  

We now motivate the subsequent Bayesian model. Since the goal to reconstruct the value of the refractive index at the center of the scatterer, using Taylor's Theorem  we have that $n(x)=n(x_0) + \mathcal{O}(h)$ where $h=|\widehat D|$ is small. Therefore, the constant term $n(x_0)$ of the Taylor polynomial centered at the center of $\widehat D$ is approximately close to all values of $n(x)$ on $\widehat D$. Based on these assumptions, we define the following auxiliary function
\begin{equation}
\eta(x) = n(x) -1 = n(x_0) -1 + \mathcal{O}(h).
\end{equation}
Therefore, we describe the following Bayesian model to solve for the posterior distribution of $n(x_0)$ given readings of the scatterer $R$, i.e. we solve for the conditional pdf function $P(\gamma | R,w_p,z_p)$
\begin{equation*}
U^s_B(x,y) \sim \mathcal{N}(\mu , \delta^2 ) \quad  \text{with} \quad  \mu (x,y) = k^2 \sum_p \omega_p \eta(z_p) \Phi(x,z_p) \Phi(z_p,y)
\end{equation*}
where $\omega_p$ and $z_p$ are the weights and quadrature points for the numerical approximation of the volume integral over $\widehat D$. Here we assume that the error from the numerical integration is negligible since one can take a quadrature rule to approximate the integral to some preferred tolerance.
By the definition of $\eta$ we have that 
\begin{equation*}
\eta(z) \sim \mathcal{N}(\gamma , h^2) \quad \text{ where } \quad \gamma \sim \mathcal{U}(-\infty, \infty).
\end{equation*}

We may solve the model for posterior distribution for $\gamma$, which is given according to the proportion:
\begin{equation}
P \big(\gamma ,\eta| R, \omega_p , z_p \big) \propto \ell \big( \mu | R, \gamma, \eta, \omega_p , z_p \big) P(\eta| \gamma, \omega_p , z_p) P(\gamma),
\end{equation}
where $\ell$ is the likelihood function arising from the data model for $y_i$ and $P(\nu | \gamma), P(\nu)$ are the assumed priors given above. One may marginalize over $\eta$ obtain $P \big( \gamma | R,\omega_p , z_p \big)$ if desired. By using the Metropolis-Hastings algorithm, we can derive this posterior distribution, and thus values for $\gamma$. We note that this model is hierarchical with care taken to model the sources of error. This should lead to a more realistic solution.

For our calculation, we use the implementation in the Python package PyMC3 \cite{pymc3}. Therefore we will have $\gamma$ which is the average value of $\eta$ which will be the approximation on $n(x_0) - 1$. For our numerical example, it is a common practice to use a normal distribution with a large standard deviation ($10^5$) rather than a flat prior, $\mathcal{U}(-\infty,\infty)$. For our example we consider the square scatterer $[-0.2 , 0.2]^2$ with refractive index $n(x_1 , x_2)=x^2_1 + 2$ where we add $15\%$ noise to the measurements.  For the reconstruction we use the domain actual scatterer $D=[-0.2 , 0.2]^2$ as well as  $\widehat D=[-0.265 , 0.265]^2$ which is an approximation taken from the MUSIC Algorithm's reconstruction of the obstacle. Using the methods described in this section we are able to determine the distribution of possible values of $\gamma$ which should approximately be  $n(x_0) -1$. \\ 

}
\begin{figure}[H]
\hspace{-0.4in}
\begin{center}
\includegraphics[scale=0.6]{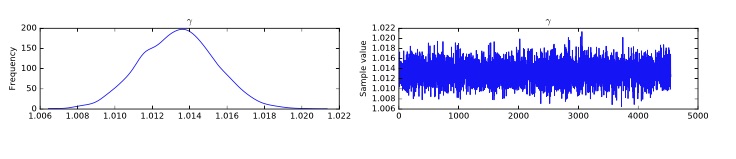}
\caption{Recovery from measurements in the case when $n(x_1 , x_2)=x^2_1 + 2$. The distribution is the possible values of $\gamma \approx n(x_0) -1 = 1$ in the reconstructed scatterer $ D=[-0.2 , 0.2]^2$.}
\end{center}
\end{figure}

\begin{figure}[H]
\hspace{-0.4in}
\begin{center}
\includegraphics[scale=0.6]{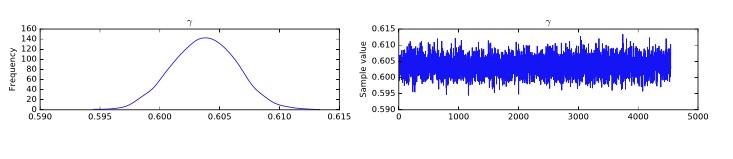}
\caption{Recovery from measurements in the case when $n(x_1 , x_2)=x^2_1 + 2$. The distribution is the possible values of $\gamma \approx n(x_0) -1 = 1$ in the reconstructed scatterer $\widehat D=[-0.265 , 0.265]^2$.}
\end{center}
\end{figure}

Similar approaches utilizing Bayesian statistics and the Born approximation have been used when applying compressive sensing techniques to inverse scattering \cite{cs1},\cite{cs2},\cite{cs3},\cite{cs4}. In those context, there is an a-priori assumption on the sparsity of the scatterer. The approach has also been applied to continuous random media \cite{cs5}. However, in this case, the scatterer is not sparse and it is not a continuous random medium. Moreover, we extend the approach by using the MUSIC algorithm to find the support of the scatterer, and use that information in our recovery of the refractive index.

\section{ {Sampling methods for anisotropic scatterers } }
\subsection{Scattering by extended anisotropic scatterers}\label{problem}
In this section we formulate the direct and inverse time-harmonic scattering problems under consideration. To this end, we let the point source located at the point $y \in C$ be given by the fundamental solution to Helmholtz equation $\Phi(x,y)$  
where $C$ is assumed to be a closed curve (in $\R^2$) or surface (in $\R^3$) that is class $\mathcal{C}^2$ or sufficiently smooth, such that $H^{1/2}(C)$ is compactly embedded in $L^2(C)$. We consider the scattering of the (non-standard) incident field $u^i( \cdot \, , y)=\overline{ \Phi( \cdot \, , y) }$ for $y \in C$ by a penetrable anisotropic obstacle. Assume that the obstacle $D \subset \R^d$ (for $d=2,3$) is a bounded simply connected open set having piece-wise smooth boundary $\partial D$ with $\nu$ being the unit outward normal to the boundary.

We assume that the constitutive parameters of the scatterer $D$ are represented by a  symmetric matrix $\tilde A \in  \mathcal{C}^{2}  \left( D, \C^{d \times d} \right)$ and a scalar function $\tilde n \in \mathcal{C}(D)$  such that  
$$\overline{\xi}\cdot \Re \left(\tilde A(x) \right) \xi\geq a_{min} |\xi|^2>0 \quad \text{and}  \quad \overline{\xi}\cdot \Im \left( \tilde A(x) \right)\xi \leq 0,$$
where as 
 $$\Re \big( \tilde n(x) \big) \geq n_{min}>0  \quad \text{and}  \quad \Im \big( \tilde n(x) \big) \geq 0$$
  for almost every $x\in D$ and all $\xi\in {\mathbb C}^d$. Outside the obstacle $D$, the material parameters are homogeneous isotropic with refractive index scaled to one. We denote by $A$ and $n$ the constitutive parameters of the homogenous background with the anisotropic obstacle $D$ in ${\mathbb R}^d$ by
 $$A(x)= \tilde A(x) \chi_D +I (1-\chi_D) \quad  \text{and} \quad n(x)= \tilde n(x) \chi_D +1 (1-\chi_D) $$
 where $I$ denotes the $d \times d$ identity matrix and $\chi_D$ is the characteristic function on $D$. Note that the support of the contrasts $A-I$ and $n-1$ is $\overline{D}$. The radiating scattered field $u^s( \cdot \, , y)$ given by the (non-standard) point source $u^i( \cdot \, , y)$ satisfies 
\begin{eqnarray}
&&\nabla_x \cdot A \nabla_x u^s +k^2 n u^s=\nabla_x \cdot (I-A)  \nabla_x u^i +k^2(1-n)u^i \quad   \textrm{ in } \,  \R^d \label{forwardprob} \\
&&{\partial_r u^s} -iku^s=\mathcal{O} \left( \frac{1}{ r^{(d+1)/2} }\right) \quad \text{ as } \quad r \rightarrow \infty , \label{src}
\end{eqnarray} 
where $r=|x|$ and \eqref{src} is satisfied uniformly in all directions $\hat{x}=x/|x|$. It can be shown that the scattering problem \eqref{forwardprob}-\eqref{src} has a unique solution $u^s \in H^1_{loc}(\R^d)$ by using a variational approach (see for e.g. \cite{p1}). The total field corresponding to the scattering problem \eqref{forwardprob} is defined as $u( \cdot \, , y)=u^s( \cdot \, , y)+u^i( \cdot \, , y)$ and satisfies 
$$u^{(-)}=u^{(+)} \quad \textrm{ and } \quad \frac{\partial u^{(-)} }{\partial \nu_{A}} =\frac{\partial u^{(+)} }{\partial \nu} \quad \textrm{ on } \partial D ,$$ 
where  the superscripts $+$ and $-$ for a generic function indicates the trace on the boundary taken from the exterior or interior of the domain, respectively. Now assume that we have the data set of near field measurements  $u^s(x,y)$ for all $x,y \in C$ and define the Near Field operator 
$$  {N} : L^2(C) \longmapsto L^2(C) \quad  \text{given by} \quad  ({N}g)(x) = \int_{C} u^s(x,y) g(y) \, ds_y.$$ 
The {\it inverse problem} we consider is to reconstruct the scattering obstacle $D$ from a knowledge of $u^s(x,y)$ for all $x, \, y \in C$. We will let $\Omega$ be the bounded simply connected open set such that $\overline{D} \subset \Omega$ and $C=\partial \Omega$. 

In the coming sections, we will see that to obtain a symmetric factorization of the near field operator we require that the incident wave be the complex conjugate of a point source. However, it is known that these non-physical sources can be approximated by a linear combination of physical point sources (see \cite{pointsource}). In \cite{nf-fm-isotropic}, the authors are able to avoid using non-physical sources but must construct the so-called outgoing-to-incoming operator. If $C$ is given by the boundary of a disk/sphere centered at the origin, then the outgoing-to-incoming operator is given by a boundary integral operator that does not depend on the type of scatterer. 

\subsection{Factorization of the near field operator}
\noindent This section is dedicated to constructing a suitable factorization of the near field operator $N$ so that we are able to use the theoretical results in {\cite{factor-music} }and/or \cite{armin}, in order to derive {indicator functions} for the support of $D$ in terms of the near field measurements. {Here we consider a modified linear sampling method as well as the factorization method and just as in \cite{arens} we will us the range characterization from the factorization method to show that these two methods give equivalent indicator functions.} To this end, motivated by the source problem given in equation (\ref{forwardprob})-(\ref{src}) for the scattered field due to the (non-standard) point source, we consider the problem of finding  $u \in H^1_{loc}(\R^d)$ for a given $v \in H^1(D)$ such that 
\begin{eqnarray}
&&\nabla \cdot A \nabla u +k^2 n u=\nabla \cdot (I-A)  \nabla v +k^2(1-n)v \quad   \textrm{ in } \,  \R^d \label{sourcep1} \\
&&\hspace{1cm} {\partial_r u^s} -iku^s =\mathcal{O} \left( \frac{1}{ r^{(d+1)/2} }\right) \quad \text{ as } \quad r \rightarrow \infty. \nonumber
\end{eqnarray}
It can be shown that for any given $v$, the solution operator that maps $v \mapsto u$ is a bounded linear mapping from $H^1(D)$ to $H^1_{loc}(\R^d)$. At this point, let us recall the exterior Dirichlet-to-Neumann mapping for Helmholtz equation 
$$\mathbb{T}_k: H^{1/2}(\partial B_R) \longmapsto H^{-1/2}(\partial B_R) \quad \text{given by} \quad \mathbb{T}_k f=\frac{\partial \varphi}{\partial \nu} \quad \text{on} \, \, \, {\partial B_R},$$ 
where $\varphi \in H^1_{loc}(\R^d \setminus \overline{B_R} )$ satisfies
\begin{align*}
&\Delta \varphi +k^2 \varphi=0 \quad  \quad \text{in } \R^d \setminus \overline{B}_R  \quad \text{and} \quad \varphi=f    \quad \text{on } \partial B_R 
\end{align*}
along with the radiation condition \eqref{src} where $B_R=\{ x \in \R^d : |x| < R\}$. With the help of the Dirichlet-to-Neumann operator we can write (\ref{sourcep1}) in its equivalent variational form: find $u\in H^1(B_R)$ such that  
 \begin{align}
& - \int\limits_{B_R} A\nabla u \cdot  \nabla \overline{\varphi} -k^2 nu  \overline{\varphi}\, dx\, + \int\limits_{\partial B_R}  \overline{\varphi}\, \mathbb{T}_k u \, ds \nonumber\\
&\hspace{3cm}=-\int\limits_{D} (I-A)\nabla v \cdot  \nabla \overline{\varphi} -k^2 (1-n)v \overline{\varphi}\, dx, \quad \forall \varphi \in H^1(B_R) ,  \label{hadjointform0}
 \end{align}
where the boundary integral over $\partial B_R$ is understood as the dual pairing between $H^{1/2}(\partial B_R)$ and $H^{-1/2}(\partial B_R)$. 
It is clear by  well-posedness that if we take $v=\overline{ \Phi( \cdot \, , y) } \big|_{D}$ then we have that the scattered field $u^s(\cdot \, y)=u$ for all $x \in \R^d$. Now define the source to trace operator 
$$G: H^1(D) \longmapsto L^2(C) \quad  \text{given by} \quad Gv=u \big|_{C}$$
as well as the bounded linear operator 
$$H : L^2(C)  \longmapsto H^1(D) \quad  \text{given by} \quad  (Hg)(x) = \int_{C} \overline{ \Phi( \cdot \, , y) }  g(y) \, ds_y \big|_{D} .$$
By superposition, we have that near field operator is given by $N=GH.$ In order to use the theory developed in \cite{kirschbook} and/or \cite{armin}, we must construct a symmetric factorization of the near field operator. Therefore, we compute the adjoint of the operator $H$ which is given in the following result. 

 \begin{theorem}\label{hadjoint}
 The operator $H^*: H^1(D) \longmapsto L^2(C)$ is given by 
 $$ H^* v= \tilde{v} \big|_{C} \quad \text{for all}  \quad v \in H^1(D),$$
 where $\tilde{v} \in H^1_{loc}(\R^d)$ is the unique radiating solution to 
 \begin{eqnarray}
 -\int\limits_{B_R} \nabla \tilde{v} \cdot  \nabla \overline{\varphi} -k^2 \tilde{v}  \overline{\varphi}\, dx +\int\limits_{\partial B_R} \overline{\varphi} \mathbb{T}_k  \tilde{v}\, ds =(v,\varphi)_{H^1(D)} \label{hadjointform}
\end{eqnarray}
 for all $\varphi \in H^1(B_R)$.
 \end{theorem}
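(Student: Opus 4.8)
The plan is to compute the sesquilinear pairing $(Hg,v)_{H^{1}(D)}$ by hand, read off $H^{*}v$ as the trace on $C$ of an explicit volume potential, and then identify that potential with the radiating solution $\tilde v$ of \eqref{hadjointform} by well-posedness.

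First I would fix $g\in L^{2}(C)$ and $v\in H^{1}(D)$. Since $\overline D\subset\Omega$ and $C=\partial\Omega$, we have $\dist(\overline D,C)>0$, so $x\mapsto (Hg)(x)=\int_{C}\overline{\Phi(x,y)}\,g(y)\,ds_{y}$ is smooth on a neighbourhood of $\overline D$; differentiating under the integral sign and applying Fubini's theorem on $D\times C$ gives
\[
(Hg,v)_{H^{1}(D)}=\int_{C} g(y)\,\overline{\left(\int_{D}\nabla_{x}\Phi(x,y)\cdot\nabla v(x)+\Phi(x,y)\,v(x)\,dx\right)}\,ds_{y}.
\]
Writing $\Theta(y):=\int_{D}\nabla_{x}\Phi(x,y)\cdot\nabla v(x)+\Phi(x,y)\,v(x)\,dx$ for $y\in\R^{d}$, this reads $(Hg,v)_{H^{1}(D)}=\big(g,\,\Theta|_{C}\big)_{L^{2}(C)}$ for every $g\in L^{2}(C)$, hence $H^{*}v=\Theta|_{C}$, and it only remains to prove $\Theta|_{C}=\tilde v|_{C}$.

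Second, I would show that $\Theta$ is exactly the radiating solution characterized by \eqref{hadjointform}. Split $\Theta=V_{1}+V_{2}$, where $V_{2}(y)=\int_{D}\Phi(x,y)v(x)\,dx$ is the acoustic volume potential with $L^{2}(D)$-density $v$, while, using $\nabla_{x}\Phi(x,y)=-\nabla_{y}\Phi(x,y)$, $V_{1}(y)=\int_{D}\nabla_{x}\Phi(x,y)\cdot\nabla v(x)\,dx=-\nabla\cdot\big(\Phi\ast(\chi_{D}\nabla v)\big)(y)$ is minus the divergence of the (componentwise) convolution of the outgoing fundamental solution with the compactly supported field $\chi_{D}\nabla v\in L^{2}(\R^{d})^{d}$. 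The classical mapping properties of volume potentials then give $\Theta\in H^{1}_{loc}(\R^{d})$, that $\Theta$ satisfies the radiation condition \eqref{src}, and, because $(\Delta+k^{2})\Phi=-\delta$, that
\[
(\Delta+k^{2})\Theta=\nabla\cdot(\chi_{D}\nabla v)-\chi_{D}v\qquad\text{in }\R^{d}.
\]
The source on the right is supported in $\overline D\subset B_{R}$; pairing this equation with $\overline{\varphi}$ for $\varphi\in H^{1}(B_{R})$, integrating by parts on $B_{R}$, and replacing $\partial_{\nu}\Theta$ on $\partial B_{R}$ by $\mathbb{T}_{k}\Theta$ (legitimate as $\Theta$ is radiating and solves the homogeneous Helmholtz equation in $\R^{d}\setminus\overline{B_{R}}$) one recovers the variational problem \eqref{hadjointform} defining $\tilde v$. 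Since that problem has a unique radiating solution, $\Theta=\tilde v$, and restriction to $C$ yields $H^{*}v=\tilde v|_{C}$.

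The computation in the first step is routine. The mild subtleties sit in the second: because $v$ is only in $H^{1}(D)$, the source $\nabla\cdot(\chi_{D}\nabla v)$ must be handled as an element of $H^{-1}(\R^{d})$ (no trace of $\partial_{\nu}v$ on $\partial D$ is ever used; it is paired only with $\nabla\overline{\varphi}\in L^{2}$), and in identifying $\Theta$ as a global $H^{1}_{loc}$-solution one must control $V_{1}$, $V_{2}$ and their gradients for $y$ ranging over all of $B_{R}$ — including $y\in D$, where the kernel is singular — which is precisely what the regularity theory for volume potentials with $L^{2}$ densities provides ($V_{2}\in H^{2}_{loc}(\R^{d})$, $V_{1}\in H^{1}_{loc}(\R^{d})$). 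I do not expect a genuine obstacle: once this bookkeeping is dispatched, the proof reduces to the Fubini computation above together with the uniqueness already asserted in the statement.
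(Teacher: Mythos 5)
Your route is genuinely different from the paper's. The paper starts from $\tilde v$ defined by \eqref{hadjointform}, inserts as test function the potential $w(x)=\int_C\overline{\Phi(x,y)}\,g(y)\,ds_y$ (whose restriction to $D$ is $Hg$), and converts $(v,Hg)_{H^1(D)}$ into boundary terms on $C$ and $\partial B_R$, finishing with the jump relation for the normal derivative of the single layer potential and the radiation condition as $R\to\infty$. You instead compute $H^{*}$ explicitly by Fubini, obtaining $H^{*}v=\Theta|_{C}$ with $\Theta(y)=\int_{D}\nabla_x\Phi(x,y)\cdot\nabla v(x)+\Phi(x,y)v(x)\,dx$, and then identify $\Theta$ with $\tilde v$ through the equation it satisfies. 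That is a legitimate and arguably more transparent argument: it gives an explicit volume-potential representation of $H^{*}v$ and replaces the jump-relation bookkeeping by standard mapping properties of volume potentials, at the cost of having to justify $\Theta\in H^{1}_{loc}(\R^d)$ and the radiation condition, which you correctly delegate to potential theory.

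There is, however, a sign problem in your final identification that you should not wave through. Your own displayed facts give $(\Delta+k^{2})\Theta=\nabla\cdot(\chi_{D}\nabla v)-\chi_{D}v$, and pairing this with $\overline{\varphi}$ yields $\langle \nabla\cdot(\chi_{D}\nabla v)-\chi_{D}v,\overline{\varphi}\rangle=-\int_{D}\big(\nabla v\cdot\nabla\overline{\varphi}+v\,\overline{\varphi}\big)\,dx=-(v,\varphi)_{H^{1}(D)}$; hence $\Theta$ satisfies \eqref{hadjointform} with right-hand side $-(v,\varphi)_{H^{1}(D)}$, i.e. $\Theta=-\tilde v$, so as written your argument proves $H^{*}v=-\tilde v|_{C}$, not the stated identity. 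Be aware that the same tension sits in the paper: redoing the paper's integration by parts carefully, the boundary term on $C$ comes out as $\int_{C}\tilde v\big(\partial_{\nu}\overline{w}^{(+)}-\partial_{\nu}\overline{w}^{(-)}\big)\,ds$, opposite in sign to what is displayed there, so the paper's proof contains a compensating slip and the theorem is correct only up to this sign (equivalently, \eqref{hadjointform} should carry $-(v,\varphi)_{H^{1}(D)}$). The discrepancy is harmless downstream, since $\text{Range}(H^{*})$ and the factorization $N=H^{*}TH$ are unaffected once the sign is absorbed into $T$, but your write-up should either carry the minus sign honestly or flag the convention, rather than asserting that the distributional identity ``recovers'' \eqref{hadjointform} as stated.
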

\begin{proof}
Let the radius of $B_R$ be sufficiently large so that $\overline{ \Omega} \subset B_R$. Given any $v \in  H^1(D)$ we can construct a unique radiating field  $\tilde{v} \in H^1_{loc}(\R^d)$ that satisfies \eqref{hadjointform}. We now let $w \in H^1_{loc}(\R^d)$ be defined as
$$w=\int_{C} \overline{ \Phi( x \, , y) }  g(y) \, ds_y \quad \text{for} \, \, \, \, x \in \R^d.$$
Therefore, we have that 
\begin{align*}
&(H^* v,g)_{L^2(C)} = (v,Hg)_{H^1(D)} \\
					    &\hspace{1.5cm}= -\int\limits_{B_R } \nabla \tilde{v} \cdot  \nabla \overline{w} -k^2 \tilde{v}  \overline{w}\, dx +\int\limits_{\partial B_R} \overline{w} \mathbb{T}_k  \tilde{v}\, ds \\
					    &\hspace{1.5cm}= -\int\limits_{B_R \setminus \overline{ \Omega} } \nabla \tilde{v} \cdot  \nabla \overline{w} -k^2 \tilde{v}  \overline{w}\, dx  - \int\limits_{ { \Omega} } \nabla \tilde{v} \cdot  \nabla \overline{w} -k^2 \tilde{v}  \overline{w}\, dx +\int\limits_{\partial B_R} \overline{w} \mathbb{T}_k \tilde{v}\, ds. 
\end{align*}
Using integration by parts on the volume integrals and the fact that $w$ solves Helmholtz equation in $\R^d \setminus C$ we have that 
\begin{eqnarray*}
&&(H^* v,g)_{L^2(C)} =  \int\limits_{C } \tilde{v} \left( \frac{\partial}{\partial \nu} \overline{w}^{(-)} -  \frac{\partial}{\partial \nu} \overline{w}^{(+)} \right) ds +\int\limits_{\partial B_R} \overline{w}  \frac{\partial}{\partial \nu} \tilde{v} -  \tilde{v} \frac{\partial}{\partial \nu} \overline{w} \, ds. \\
\end{eqnarray*}
Now using the jump relations for the normal derivative of the single layer potential for Helmholtz equation gives that 
$$ \frac{\partial}{\partial \nu} \overline{w}^{(-)} -  \frac{\partial}{\partial \nu} \overline{w}^{(+)} = \overline{g} \quad \text{on} \quad C.$$
Notice that both $\tilde{v}$ and $\overline{w}$ satisfy the radiation condition \eqref{src} therefore letting $R \rightarrow \infty$ gives that 
$$(H^* v,g)_{L^2(C)} = ( \tilde{v} , g )_{L^2(C)},$$
proving the claim. 
\end{proof}

Just as in \cite{fmdefect} notice that \eqref{sourcep1}, implies that 
\begin{align}
&\Delta u +k^2 u=\nabla \cdot (I-A)  \nabla (v+u) +k^2(1-n)(v+u) \,  \textrm{ in } \,  \R^d , \label{sourcepw}
\end{align}
where $u \in H^1_{loc}(\R^d)$ is the solution to \eqref{sourcep1} for a given $v \in H^1(D)$. The variational form of \eqref{sourcepw} is given by 
\begin{align*}
&\hspace{-1.5cm}-\int\limits_{B_R} \nabla u \cdot  \nabla \overline{\varphi} -k^2 u \overline{\varphi}\, dx + \int\limits_{\partial B_R} \overline{\varphi} \mathbb{T}_k u \, ds  = - \int\limits_{D} (I-A) \nabla (v+u) \cdot \nabla \overline{ \varphi } \, dx \\
&\hspace{2cm} +k^2  \int\limits_{D} (1-n)(v+u) \overline{ \varphi } \, dx \qquad \forall \varphi \in H^1(B_R).
\end{align*}
Now appealing to the Riesz representation theorem, we can define the bounded linear operator $T: H^1(D) \longmapsto H^1(D)$  such that for all $v \in H^1(D)$
\begin{equation} 
(T v, \varphi)_{ H^1(D) }= - \int\limits_{D} (I-A) \nabla (v+u) \cdot \nabla \overline{ \varphi }-k^2 (1-n)(v+u) \overline{ \varphi } \, dx.   \,\,\,  \label{tk}
\end{equation}
Notice for $u\in H^1_{loc}(\R^d)$ the unique solution to \eqref{sourcep1} that by the definition of $G$ we have that $u \big|_{C}=Gv$. Now by the definition of $H^*$ and $T$ we conclude that $u \big|_{C} = H^*T v$. We conclude that $Gv = H^*T v$ for all $v \in H^1(D)$.  Now recalling that $N=GH$ gives the desired factorization. 

\begin{lemma}
The near field operator $N: L^2(C) \longmapsto  L^2(C)$ for the scattering problem \eqref{forwardprob}-\eqref{src} has the factorization $N=H^*TH$. 
\end{lemma}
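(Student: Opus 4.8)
The plan is to assemble the factorization directly from the operators and identities already constructed in this section, so the proof is largely bookkeeping. First I would recall that, by superposition, the near field operator factors as $N = GH$: linearity of the source problem \eqref{sourcep1} in its data together with linearity of the integral defining $N$ shows that applying $H$ to a density $g$ produces the Herglotz-type superposition $Hg = \int_C \overline{\Phi(\cdot\,,y)}\,g(y)\,ds_y\big|_D \in H^1(D)$, and feeding this into $G$ returns the restriction to $C$ of the scattered field generated by the superposed (conjugated point-source) incident field, which is precisely $Ng$. So $N = GH$.

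Second, and this is the heart of the matter, I would show that $G = H^*T$ as operators from $H^1(D)$ to $L^2(C)$. Fix $v \in H^1(D)$ and let $u \in H^1_{loc}(\R^d)$ be the solution of \eqref{sourcep1}. As noted in \eqref{sourcepw}, $u$ satisfies $\Delta u + k^2 u = \nabla\cdot(I-A)\nabla(v+u) + k^2(1-n)(v+u)$ in $\R^d$, whose variational form over $B_R$ with the Dirichlet-to-Neumann operator $\mathbb{T}_k$ on $\partial B_R$ has right-hand side $-\int_D (I-A)\nabla(v+u)\cdot\nabla\overline\varphi - k^2(1-n)(v+u)\overline\varphi\,dx$. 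By the Riesz representation theorem this is exactly $(Tv,\varphi)_{H^1(D)}$ with $T$ the operator defined in \eqref{tk}. Hence $u$ solves the variational problem \eqref{hadjointform} of Theorem \ref{hadjoint} with the element $Tv \in H^1(D)$ in place of $v$; by uniqueness of the radiating $H^1_{loc}$ solution of that problem, $u$ coincides with the function "$\tilde v$" associated to $Tv$ in Theorem \ref{hadjoint}, so that $u\big|_C = H^*(Tv)$. Since $Gv = u\big|_C$ by the definition of $G$, we obtain $Gv = H^*Tv$ for every $v\in H^1(D)$, i.e. $G = H^*T$.

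Finally, combining the two steps gives $N = GH = (H^*T)H = H^*TH$, which is the claimed symmetric factorization.

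The only genuinely delicate point is the identification in the second step: one must verify that the variational formulation of \eqref{sourcepw} and the variational problem \eqref{hadjointform} defining $H^*$ are literally the same problem once the right-hand side of the former is recognized as the $H^1(D)$-inner product $(Tv,\cdot)_{H^1(D)}$, and then invoke well-posedness (existence and uniqueness of the radiating solution, as used throughout this section) to conclude that the two solutions agree. Everything else — the superposition argument for $N=GH$ and the final composition of operators — is immediate.
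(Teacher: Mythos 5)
Your proposal is correct and follows essentially the same route as the paper: the factorization $N=GH$ by superposition, the identification of the right-hand side of the variational form of \eqref{sourcepw} with $(Tv,\cdot)_{H^1(D)}$ via \eqref{tk}, and the recognition that $u$ therefore solves the problem \eqref{hadjointform} defining $H^*$ with $Tv$ in place of $v$, giving $G=H^*T$ and hence $N=H^*TH$. Your explicit appeal to uniqueness of the radiating solution in the identification step is exactly the (implicit) justification the paper relies on.
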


\subsection{The solution to the inverse problem}
\noindent The main goal of this section is to connect the support of $D$ to the range of an operator defined by the measured near field operator. We make this connection by analyzing the factorization of the near field operator developed in the previous section. We recall from the previous section that  we have the following factorization $N=H^*TH$. Under some appropriate assumptions on $H$ and $T$, the factorization method gives that Range$(H^*)$=$\text{Range} ( N_{\sharp}^{1/2} )$. 
{Here $N_{\sharp}$ will be given by $N_{\sharp}= |\Re({N})| +|\Im({N})| $ for a non-absorbing material or $N_{\sharp}= \sigma \Im({N})$ with $|\sigma|=1$ for an absorbing material where 
$$\Re(N)=\frac{N+N^*}{2}  \quad \text{and}  \quad \Im(N)=\frac{N-N^*}{2\text{i}} .$$
}
Furthermore for a generic self-adjoint compact operator $B$ on a Hilbert space $U$, by appealing to the Hilbert-Schmidt theorem we can define $|B|$ in terms of the spectral decomposition such that 
$$|B|x = \sum |\lambda_j | (x, \psi_j) \psi_j$$
for all $x\in U$ where  $(\lambda_j , \psi_j) \in \R \times U$ is the  orthonormal eigensystem of $B$. 

\subsubsection{Analysis of the operator $H$} 
Now we turn our attention to showing that the operator $H$ has the properties needed to apply Theorems \ref{fmthm2}. To this end, let us define the interior transmission problem as finding a pair $(w,v) \in H^1(D) \times H^1(D)$ such that for a given pair $(f,h) \in H^{1/2}(\partial D) \times H^{-1/2}(\partial D)$ satisfies
\begin{align}
&\nabla \cdot A \nabla w +k^2 n w=0 \quad \text{and} \quad \Delta  v + k^2 v=0  \quad \text{ in } \,\,   D \label{fmitp1} \\
&w-v= f \quad \text{and} \quad  \frac{\partial w}{\partial \nu_{A}}-\frac{\partial v}{\partial \nu}=  h \quad \text{ on }\, \,  \partial D \label{fmitp3}
\end{align}
\begin{definition} \label{trig} {\em The values of $k \in \C$  for which the homogeneous interior transmission problem$\big($i.e. \eqref{fmitp1}-\eqref{fmitp3} with  $(f,h)=(0,0)\big)$, has nontrivial  solutions are called transmission eigenvalues for $A, \, n$ and $D$.}
\end{definition}

 It is known that under appropriate assumptions on the coefficients that the interior transmission problem \eqref{fmitp1}-\eqref{fmitp3} is Fredholm of index zero, i.e if $k$ is not a transmission eigenvalue then \eqref{fmitp1}-\eqref{fmitp3} is well-posed (see for e.g. \cite{Bon-Che-Had-2011} and \cite{chh}). The following results are know for the transmission eigenvalue problem (see \cite{Bon-Che-Had-2011}, \cite{p1} and \cite{cakonikirsch})
\begin{enumerate}
\item If the scatterer is absorbing such that $\Im(A)\leq0$ and $\Im (n)>0$ in $D$ then there are no real transmission eigenvalues.
\item If the scatterer is non-absorbing (i.e. $\Im(A)=0$ and $\Im(n)=0$) then the set of real transmission eigenvalues is at most discrete provided that $A-I$ is positive (or negative) definite uniformly in $D$.
\item If the scatterer is non-absorbing then the set of real transmission eigenvalues is at most discrete provided that $A-I$ is positive (or negative) definite uniformly and $n-1$ is positive (or negative) in a neighborhood of the boundary $\partial D$.
\end{enumerate}

\begin{assumption}\label{note}
The wave number $k \in \R$ is not a transmission eigenvalue for \eqref{fmitp1}-\eqref{fmitp3}. 
\end{assumption}

We are now ready to connect the support of $D$ to the range of $H^*$. Since we assume that the obstacle $D$ is contained in the interior of $\Omega$  we only need to consider sampling points $z \in \Omega$. 

\begin{theorem} \label{rangeh}
Under Assumption \ref{note}, we have that for $z \in \Omega$ 
$$\Phi( \cdot \, , z) \in \text{Range}(H^*)\quad  \text{ if and only if} \quad z \in D.$$
\end{theorem}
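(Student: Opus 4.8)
The plan is to prove the two directions of the equivalence $\Phi(\cdot,z)\in\mathrm{Range}(H^*)$ iff $z\in D$ separately, using the characterization of $H^*$ from Theorem~\ref{hadjoint} together with the interior transmission problem and Assumption~\ref{note}. First I would recall from Theorem~\ref{hadjoint} that $H^*v=\tilde v|_C$, where $\tilde v\in H^1_{loc}(\R^d)$ is the radiating solution of \eqref{hadjointform}; the crucial structural observation is that $\tilde v$ solves the homogeneous Helmholtz equation outside $D$ (take test functions $\varphi$ supported away from $D$ in \eqref{hadjointform}) and is radiating. So every element of $\mathrm{Range}(H^*)$ is the trace on $C$ of a radiating Helmholtz solution in $\R^d\setminus\overline D$ that extends as an $H^1$-function across $C$.

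For the ``if'' direction, suppose $z\in D$. I would try to produce $v\in H^1(D)$ so that the associated $\tilde v$ agrees with $\Phi(\cdot,z)$ outside $D$. The natural candidate is to solve the interior transmission problem \eqref{fmitp1}--\eqref{fmitp3} with boundary data $(f,h)=\big(\Phi(\cdot,z)|_{\partial D},\ \partial_\nu\Phi(\cdot,z)|_{\partial D}\big)$, which is well-posed by Assumption~\ref{note}, producing a pair $(w,v)$. Then I would set $\tilde v$ equal to $v$ inside $D$ (up to checking it matches the equation \eqref{hadjointform} has in $D$, which involves the $H^1(D)$ Riesz representative — this needs a small argument relating the right-hand side $(v,\varphi)_{H^1(D)}$ to the transmission problem, perhaps by choosing $v$ to be the $H^1(D)$ solution to an appropriate auxiliary problem whose interior data produce the required jump) and equal to $\Phi(\cdot,z)$ in $\R^d\setminus\overline D$. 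The matching conditions $w-v=f$, $\partial_{\nu_A}w-\partial_\nu v=h$ on $\partial D$ ensure the global function has the right jumps so that $\tilde v$ is exactly the radiating solution of \eqref{hadjointform} for this $v$; restricting to $C$ gives $H^*v=\Phi(\cdot,z)|_C$. One must verify $z\in D$ (not on $\partial D$) guarantees $\Phi(\cdot,z)\in H^1$ near $\partial D$, so the boundary data are well-defined.

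For the ``only if'' direction, suppose $z\in\Omega\setminus\overline D$ (and also rule out $z\in\partial D$) but $\Phi(\cdot,z)=H^*v|_C$ for some $v$. Then $\tilde v$ and $\Phi(\cdot,z)$ are both radiating Helmholtz solutions in $\R^d\setminus\overline\Omega$ with the same Cauchy data on $C$ (equal traces, and equal normal derivatives by the single-layer jump argument as in the proof of Theorem~\ref{hadjoint}, or simply by Holmgren/unique continuation from equal Dirichlet data on $C$ for two radiating solutions), hence they coincide in $\R^d\setminus\overline\Omega$ and, by unique continuation, in the connected set $\R^d\setminus(\overline D\cup\{z\})$. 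But $\Phi(\cdot,z)$ has a singularity at $z$ whereas $\tilde v$ is smooth at $z$ (since $z\notin\overline D$, $\tilde v$ solves Helmholtz in a neighborhood of $z$), a contradiction. The case $z\in\partial D$ is handled by a similar singularity argument or excluded at the outset.

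The main obstacle I anticipate is the ``if'' direction: carefully matching the weak formulation \eqref{hadjointform} — whose right-hand side is the $H^1(D)$ inner product with $v$ — to the interior transmission problem, i.e. showing that the $v$ one reads off from the transmission solution actually reproduces the correct $\tilde v$. This is a bookkeeping issue about how the $H^1(D)$ Riesz representative encodes the jump data, and getting the boundary data $(f,h)$ and the resulting jumps to line up is the delicate part; the ``only if'' direction is comparatively routine unique-continuation plus singularity analysis.
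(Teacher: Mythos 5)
Your ``only if'' direction ($z\in\Omega\setminus\overline D$) is essentially the paper's argument: uniqueness for the exterior Dirichlet problem plus Holmgren's theorem and unique continuation force $\tilde v=\Phi(\cdot\,,z)$ in $\Omega\setminus(\overline D\cup\{z\})$, and the local $H^1$ regularity of $\tilde v$ near $z$ contradicts the singularity of $\Phi(\cdot\,,z)$; that part is fine (the paper, like you, does not treat $z\in\partial D$ separately).

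The gap is in the ``if'' direction, and it is exactly the point you deferred as ``bookkeeping''. The concrete candidate you wrote down fails: with $(w_z,v_z)$ the solution of \eqref{fmitp1}--\eqref{fmitp3} for data $\big(\Phi(\cdot\,,z),\partial_\nu\Phi(\cdot\,,z)\big)$, the function equal to $v_z$ in $D$ and to $\Phi(\cdot\,,z)$ outside is in general not even continuous across $\partial D$ --- the first transmission condition gives $w_z-v_z=\Phi(\cdot\,,z)$ on $\partial D$, not $v_z=\Phi(\cdot\,,z)$ --- and correspondingly $H^*v_z\neq\Phi(\cdot\,,z)\big|_C$: the preimage is not the ITP component $v_z$ itself. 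What is needed (and what the paper does) is to glue $u_z:=w_z-v_z$ in $D$ with $\Phi(\cdot\,,z)$ in $\R^d\setminus\overline D$; the two transmission conditions make $u_z\in H^1_{loc}(\R^d)$ and yield $\Delta u_z+k^2u_z=\nabla\cdot(I-A)\nabla w_z+k^2(1-n)w_z$ in all of $\R^d$ with no extra interface terms. Writing this in the variational form \eqref{theta} and letting $\phi_z\in H^1(D)$ be the Riesz representative of the functional $\varphi\mapsto-\int_D(I-A)\nabla w_z\cdot\nabla\overline\varphi-k^2(1-n)w_z\overline\varphi\,dx$, one sees that \eqref{hadjointform} holds with $v$ replaced by $\phi_z$ and $\tilde v=u_z$, hence $H^*\phi_z=u_z\big|_C=\Phi(\cdot\,,z)\big|_C$ by the very definition of $H^*$. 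Your parenthetical about ``choosing $v$ to be the solution of an appropriate auxiliary problem'' points toward this, but without identifying $\phi_z$ (rather than $v_z$) as the preimage and $w_z-v_z$ (rather than $v_z$) as the interior extension, the ``if'' direction does not close.
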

\begin{proof}
We start with the case where $z \in \Omega \setminus \overline{D}$ and assume that $v \in H^1(D)$ is such that $H^* v=\Phi( \cdot \, , z)$. By the definition of $H^*$ there exists a $\tilde{v} \in H^1_{loc}(\R^d)$ satisfying \eqref{hadjointform} which implies that  
$$\Delta \tilde{v}+k^2  \tilde{v}=0 \quad \text{in}\quad \R^d \setminus \overline{D} \quad \text{and} \quad \tilde{v} =\Phi( \cdot , z) \quad \text{on} \, \, C $$
along with the Sommerfeld radiation condition \eqref{src}. Now since $z \in \Omega \setminus \overline{D}$, we have that $\Phi( \cdot \, , z)$ is in $H^1_{loc}(\R^d \setminus \overline{\Omega})$, and by the uniqueness of solutions to the exterior Dirichlet problem for Helmholtz equation, we have that $\tilde{v} =\Phi( \cdot , z)$ in $\R^d \setminus \overline{\Omega}$. Therefore by Holmgren's theorem and unique continuation, we have that $\tilde{v}=\Phi( \cdot , z) $ in $\Omega \setminus (\overline{D} \cup \{z\})$. We then conclude that $\Phi( \cdot \, , z) \notin \text{Range}(H^*)$ since 
$$\Phi( \cdot , z) \notin H^1\big(B_r(z)\big) \quad \text{and} \quad \tilde{v} \in H^1\big(B_r(z)\big),$$
for any disk $B_r(z)$ centered at $z$ of radius $r>0$.

Now assume that $z \in D$ then we have that $\Phi( \cdot \, , z)\in H^1_{loc}(\R^d \setminus \overline{D})$. Since $k$ is not a transmission eigenvalue in $D$ we have that there is a unique solution to the interior transmission problem \eqref{fmitp1}-\eqref{fmitp3} with 
$$(f,h)=\left( \Phi( \cdot \, , z) \, ,\,  \displaystyle{\frac{\partial}{\partial \nu}} \Phi( \cdot \, , z)  \right) \quad \text{denoted} \quad (w_z,v_z) \in H^1(D) \times H^1(D).$$ 
Now define the function  
$$u_z :=\left\{ \begin{array}{rcl} w_z-v_z & \mbox{in} & D \\   
\\
\Phi( \cdot \, , z) & \mbox{in} &\R^d \setminus \overline{D}.  \end{array} \right. $$
Therefore we have that $u_z \in H^1_{loc}(\R^d)$ with $u_z \big|_C=\Phi( \cdot \, , z) \big|_C$ such that  
\begin{eqnarray*}
&&\Delta u_z +k^2 u_z=\nabla \cdot (I-A)  \nabla w_z +k^2(1-n) w_z  \,  \textrm{ in } \,  \R^d. 
\end{eqnarray*}
The variational form of the above equation gives that for all $\varphi \in H^1(B_R)$ with $\mathbb{T}_k$ being the Dirichlet to Neumann mapping 
 \begin{align}
&\hspace{-2cm} -\int\limits_{B_R} \nabla u_z \cdot  \nabla \overline{\varphi} -k^2  u_z  \overline{\varphi}\, dx +\int\limits_{\partial B_R} \overline{\varphi} \mathbb{T}_k u_z \, ds = \nonumber \\
&\hspace{2cm}- \int\limits_{D} (I-A) \nabla w_z \cdot \nabla \overline{ \varphi }   - k^2 (1-n) w_z \overline{ \varphi } \, dx. \label{theta}
 \end{align}
 Now let $\phi_z \in H^1(D)$ be defined from the right hand side of (\ref{theta}) by appealing to the Riesz representation theorem, hence we have  
$$-\int\limits_{B_R} \nabla u_z \cdot  \nabla \overline{\varphi} -k^2  u_z  \overline{\varphi}\, dx +\int\limits_{\partial B_R} \overline{\varphi} \mathbb{T}_k u_z \, ds =(\phi_z,\varphi)_{H^1(D)} .$$
Thus we now conclude that $ H^* \phi_z=\Phi( \cdot ,z)$ by the definition of $H^*$ giving the result. 
\end{proof}

\noindent  Next we show that $H$ satisfies that necessary properties to apply Theorems \ref{fmthm2}.

\begin{theorem} \label{rangeh1}
The operator $H :L^2(C) \longmapsto H^1(D)$ is compact and injective. 
\end{theorem}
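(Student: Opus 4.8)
The plan is to view $H$ as the restriction to $D$ of a conjugated single-layer potential over $C$, and to exploit that $\overline{D}\subset\Omega$ is strictly separated from the integration curve $C=\partial\Omega$. For compactness, I would first note that since $\dist(\overline{D},C)>0$, the kernel $(x,y)\mapsto\overline{\Phi(x,y)}$ and all of its $x$-derivatives are continuous and uniformly bounded on $\overline{D}\times C$; differentiating under the integral sign and using Cauchy–Schwarz on the bounded curve $C$ then shows that $g\mapsto Hg$ is bounded from $L^2(C)$ into $C^k(\overline{D})$ for every $k$, hence in particular into $H^2(D)$. (Alternatively, one observes that $Hg$ solves the Helmholtz equation throughout $\Omega$ and applies interior elliptic regularity on the pair $\overline{D}\Subset\Omega$.) Since $D$ has piecewise smooth boundary, the embedding $H^2(D)\hookrightarrow H^1(D)$ is compact by Rellich's theorem, so $H:L^2(C)\to H^1(D)$ is compact.

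For injectivity, I would suppose $Hg=0$, i.e.\ the function $w(x):=\int_C\overline{\Phi(x,y)}\,g(y)\,ds_y$ vanishes on $D$. This $w$ lies in $H^1_{loc}(\R^d)$, is continuous across $C$, solves $\Delta w+k^2 w=0$ in $\R^d\setminus C$, and satisfies the conjugate (incoming) Sommerfeld radiation condition. Since $w$ solves the Helmholtz equation on the connected open set $\Omega$ (whose boundary $C$ lies outside $\Omega$) and vanishes on the nonempty open subset $D\subset\Omega$, unique continuation — equivalently real-analyticity of solutions of the Helmholtz equation with constant coefficients — yields $w\equiv 0$ in $\Omega$, and in particular $w=0$ and $\partial_\nu w=0$ on $C$ taken from the interior.

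Next I would propagate the vanishing to the exterior: by continuity of $w$ across $C$ we also have $w=0$ on $C$ from the outside, so the exterior trace solves the Helmholtz equation in $\R^d\setminus\overline\Omega$ with zero Dirichlet data and the conjugate radiation condition; hence $\overline{w}$ solves the standard exterior Dirichlet problem with zero data and vanishes by uniqueness, so $w\equiv 0$ in $\R^d\setminus\overline\Omega$. Thus $w\equiv 0$ in $\R^d\setminus C$, and the jump relation for the normal derivative of the single-layer potential — the same one used in the proof of Theorem \ref{hadjoint}, namely $\partial_\nu\overline{w}^{(-)}-\partial_\nu\overline{w}^{(+)}=\overline{g}$ on $C$ — together with $\partial_\nu w^{(\pm)}=0$ forces $g=0$. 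This establishes injectivity.

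The regularity estimate and the single/double-layer jump relations are routine; the one step I expect to require care is the exterior propagation, where I must track the conjugation correctly (so that $\overline{w}$, rather than $w$, is the outgoing field to which exterior Dirichlet uniqueness for the Helmholtz equation applies) and make sure no spurious interior Dirichlet eigenvalue of $\Omega$ enters — which it does not, since the interior vanishing comes from unique continuation within $\Omega$ and not from solving a boundary value problem. I anticipate this bookkeeping to be the main, though still minor, obstacle.
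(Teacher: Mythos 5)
Your argument is correct, and your injectivity proof is essentially the paper's: you form the single-layer potential associated with $Hg$ (you keep the conjugated kernel and pass to $\overline{w}$, the paper works directly with $w=\int_C\Phi(\cdot,y)\overline{g(y)}\,ds_y$, which is the same bookkeeping), kill it in $\Omega$ by unique continuation from $w=0$ on $D$, kill it outside $\Omega$ by continuity of the single-layer trace and exterior Dirichlet uniqueness for the radiating field, and recover $g=0$ from the normal-derivative jump relation — exactly the chain used in the paper, and your care about which of $w,\overline{w}$ is outgoing is the right care to take. Where you genuinely diverge is compactness. The paper proves it by duality: $H^*v=\tilde v|_C$ factors as the bounded solution map $v\mapsto\tilde v$, the trace into $H^{1/2}(C)$, and the compact embedding $H^{1/2}(C)\hookrightarrow L^2(C)$ (a standing assumption on $C$ in Section \ref{problem}), so $H^*$ and hence $H$ is compact. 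You instead exploit $\dist(\overline D,C)>0$ to get smoothing of the potential into $\overline D$ and then a compact embedding on $D$. Both work; yours is more elementary and avoids identifying $H^*$ at all, but it leans on the strict separation of $\overline D$ from $C$ (true here since $\overline D\subset\Omega$, $C=\partial\Omega$) and, as stated, on $H^2(D)\hookrightarrow H^1(D)$ being compact, which needs $D$ to be a Lipschitz/extension domain — harmless under the piecewise-smooth-boundary assumption, and avoidable anyway since your kernel estimates give boundedness into $C^2(\overline D)$, so Arzel\`a--Ascoli yields precompactness in $C^1(\overline D)\hookrightarrow H^1(D)$ for any bounded $D$. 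The paper's route, by contrast, does not need the separation and places the compactness burden on the curve $C$ rather than on $D$.
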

\begin{proof}
Notice that the mapping $v \in H^1(D) \mapsto \tilde{v} \in H^1_{loc}(\R^d)$ is bounded and by the Trace theorem the mapping $\tilde{v} \in H^1_{loc}(\R^d) \mapsto \tilde{v} \big|_C \in H^{1/2}(C)$ is also bounded. Now using that $H^{1/2}(C)$ is compactly embedded in $L^2(C)$ gives that $H^*$ is compact and therefore so is $H$. 

Now assume that $g \in$ Null$(H)$, therefore we define 
$$w=\int_{C} { \Phi( x \, , y) }  \overline{g(y)} \, ds_y \quad \text{for} \,\,\,\, x \in \R^d.$$
We have that $w$ is a radiating solution to Helmholtz equation in $\R^d \setminus C$, and by the definition of $H$ we have that $w=0$ in $D$, and therefore by unique continuation we have that $w=0$ in $\Omega$. The continuity of the trace of $w$ on $C$ as well as the uniqueness of solutions to the exterior Dirichlet problem for Helmholtz equation implies that $w=0$ in $B_R$ for any radius sufficiently large. Now assume that $\overline{\Omega} \subset B_R$ using the jump relations for the normal derivative for the single layer potential for Helmholtz equation on $C$ gives that 
$$0=\left( \frac{\partial}{\partial \nu} {w}^{(-)} -  \frac{\partial}{\partial \nu} {w}^{(+)} \right)= \overline{g}$$
proving the claim. 
\end{proof}

\subsubsection{Analysis of the operator $T$} 
Next we analyze the properties of the middle operator $T$ defined variationally by equation \eqref{tk}. The operator $T$ is defined similarly to the operator used in the factorization of the far field operator discussed in \cite{fmdefect}, this gives that the analysis 
{
to show that $T$ satisfies the assumptions in Theorem \ref{fmthm2} provided that the material is non absorbing (see \cite{fmdefect} for details). The following result can be obtained from the analysis in \cite{fmdefect}. 

\begin{theorem} \label{T-analysis}
Assume $\Im(A) =0$, $\Im(n) =0$ in $D$ and $A-I$ is positive (or negative) definite  uniformly in $D$ and $n-1$ is negative (or positive) uniformly in $D$. Then the operator $T : H^1(D) \longmapsto  H^1(D)$ satisfies that following:
\begin{enumerate}
\item $T$ is injective 
\item $\Im(T)$ is compact and non-positive
\item  If $A-I$ is uniformly positive definite in $D$ then $\Re(T)$ is the sum of a compact operator and a self-adjoint coercive operator
\item If $A-I$ uniformly negative definite in $D$ then $-\Re(T)$ is the sum of a compact operator and a self-adjoint coercive operator
\end{enumerate}
\end{theorem}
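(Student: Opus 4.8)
The plan is to adapt the far‑field analysis of \cite{fmdefect} to the near‑field setting; the only structural change is that the exterior Dirichlet‑to‑Neumann map $\mathbb{T}_k$ on $\partial B_R$ takes over the bookkeeping role played there by the far‑field operator, so the energy identities pick up a boundary term on $\partial B_R$ instead of a far‑field norm. Fix $v\in H^1(D)$, write $u=u(v)\in H^1_{loc}(\R^d)$ for the solution of \eqref{sourcep1}, and set $w=v+u$ in $D$. The computational backbone is obtained by evaluating the defining form \eqref{tk} of $T$ at $\varphi=v$, $\varphi=u|_D$, and $\varphi=w|_D$, and by testing the variational form of \eqref{sourcepw} — and, separately, that of \eqref{hadjointform0} — with $\varphi=u$; this produces
\begin{align*}
(Tv,w|_D)_{H^1(D)}&=-\int_D (I-A)\nabla w\cdot\overline{\nabla w}\,dx+k^2\int_D(1-n)|w|^2\,dx,\\
(Tv,u|_D)_{H^1(D)}&=-\int_{B_R}|\nabla u|^2\,dx+k^2\int_{B_R}|u|^2\,dx+\int_{\partial B_R}\overline u\,\mathbb{T}_k u\,ds,
\end{align*}
together with a third identity in which \eqref{hadjointform0} replaces \eqref{sourcepw}. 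Since $(Tv,v)_{H^1(D)}=(Tv,w|_D)_{H^1(D)}-(Tv,u|_D)_{H^1(D)}$, every quadratic form in $v$ that is needed is expressed through $A-I$, $n-1$ integrated over $D$, the Dirichlet energy of $u$ on $B_R$, and $\int_{\partial B_R}\overline u\,\mathbb{T}_k u\,ds$. I will use that $v\mapsto u$ is bounded from $H^1(D)$ into $H^1(B_R)$, that $H^1(D)\hookrightarrow L^2(D)$ and $H^{1/2}(\partial B_R)\hookrightarrow L^2(\partial B_R)$ are compact — so any quadratic form in $v$ factoring through $v\mapsto v\in L^2(D)$, $v\mapsto u|_D\in L^2(D)$, or $v\mapsto u|_{\partial B_R}\in L^2(\partial B_R)$ is compact — and the standard facts that $\Im\mathbb{T}_k$ is smoothing on $\partial B_R$ and $-\Re\mathbb{T}_k$ is the sum of a coercive and a compact operator.

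For item~(2), the hypotheses $\Im(A)=0$ and $\Im(n)=0$ make $I-A$ real symmetric and $1-n$ real, so $(Tv,w|_D)_{H^1(D)}$ is real; hence $\Im(Tv,v)_{H^1(D)}=-\Im(Tv,u|_D)_{H^1(D)}=-\Im\int_{\partial B_R}\overline u\,\mathbb{T}_k u\,ds\le0$, since $u$ is a radiating solution of the Helmholtz equation exterior to $B_R$. As this quantity depends on $v$ only through the bounded map $v\mapsto u|_{\partial B_R}\in H^{1/2}(\partial B_R)$ followed by the smoothing form $f\mapsto\Im\int_{\partial B_R}\overline f\,\mathbb{T}_k f\,ds$, the operator $\Im(T)$ (whose quadratic form is $v\mapsto\Im(Tv,v)_{H^1(D)}$) is non‑positive and compact.

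For items~(3) and (4) the two sign regimes are handled by the two parallel identities above. If $A-I\ge a_0 I>0$ in $D$, combining the two displayed identities with $(Tv,v)=(Tv,w|_D)-(Tv,u|_D)$ gives
\begin{align*}
\Re(Tv,v)_{H^1(D)}={}&\int_D (A-I)\nabla w\cdot\overline{\nabla w}\,dx+\int_{B_R}|\nabla u|^2\,dx\\
&{}+k^2\int_D(1-n)|w|^2\,dx-k^2\int_{B_R}|u|^2\,dx-\Re\int_{\partial B_R}\overline u\,\mathbb{T}_k u\,ds.
\end{align*}
The first two terms are non‑negative, and substituting $\nabla v=\nabla w-\nabla u$ together with $|\nabla w|^2+|\nabla u|^2\ge\tfrac12|\nabla v|^2$ bounds them below by $c\|\nabla v\|_{L^2(D)}^2$; adding and subtracting $c\|v\|_{L^2(D)}^2$ (a compact form) makes this the quadratic form of a self‑adjoint coercive operator modulo a compact one. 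The two $L^2$‑type terms over $D$ and $B_R$ are compact in $v$, and the $\mathbb{T}_k$‑term splits, via $-\Re\mathbb{T}_k=(\text{coercive})+(\text{compact})$, into a non‑negative piece — absorbed into the coercive part — and a compact one, so $\Re(T)$ is coercive plus compact. If instead $A-I\le -a_0 I<0$, one uses the identity coming from \eqref{hadjointform0}, namely
\begin{align*}
-\Re(Tv,v)_{H^1(D)}={}&\int_D (I-A)\nabla v\cdot\overline{\nabla v}\,dx+\int_{B_R}A\nabla u\cdot\overline{\nabla u}\,dx\\
&{}-k^2\int_{B_R}n|u|^2\,dx-\Re\int_{\partial B_R}\overline u\,\mathbb{T}_k u\,ds-k^2\int_D(1-n)|v|^2\,dx,
\end{align*}
where now $\int_D(I-A)\nabla v\cdot\overline{\nabla v}\,dx\ge a_0\|\nabla v\|_{L^2(D)}^2$ directly and $\int_{B_R}A\nabla u\cdot\overline{\nabla u}\,dx\ge0$, the remaining terms being compact as before; the same add/subtract argument shows $-\Re(T)$ is coercive plus compact. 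The step that requires care — and which I expect to be the main obstacle — is exactly this: the terms involving $\nabla u$ do \emph{not} become compact under $v\mapsto u$, so they cannot be discarded as lower order, and the argument closes only because for each sign of $A-I$ one has a variational identity in which those terms enter with the sign that assists coercivity; likewise the mapping properties of $\mathbb{T}_k$ are what keep the $\partial B_R$‑term harmless.

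For item~(1), suppose $Tv=0$. Then $Gv=H^*Tv=0$, so $u|_C=0$; since $u$ is radiating and solves the Helmholtz equation in $\R^d\setminus\overline D$ with vanishing trace on $C=\partial\Omega$, uniqueness of the exterior Dirichlet problem and unique continuation give $u=0$ in $\R^d\setminus\overline D$, hence $u|_{\partial D}=0$ and $\partial_\nu u|_{\partial D}^{+}=0$. Reading \eqref{tk} with $Tv=0$ shows $\nabla\cdot\big((I-A)\nabla w\big)+k^2(1-n)w=0$ in $D$ with $(I-A)\nabla w\cdot\nu=0$ on $\partial D$, while \eqref{sourcep1} gives $\nabla\cdot(A\nabla w)+k^2 n w=\Delta v+k^2 v$ in $D$ with $w=v$ and $\partial_{\nu_A} w=\partial_\nu v$ on $\partial D$. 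Adding the two interior equations yields $\Delta u+k^2 u=0$ in $D$, and combining the boundary relations gives $u=\partial_\nu u=0$ on $\partial D$; hence $u\equiv0$ in $D$ by unique continuation, so $w=v$. The first equation then reduces to $\nabla\cdot\big((A-I)\nabla v\big)+k^2(n-1)v=0$ in $D$ with $(A-I)\nabla v\cdot\nu=0$ on $\partial D$, and pairing with $\overline v$ gives $\int_D (A-I)\nabla v\cdot\overline{\nabla v}\,dx=k^2\int_D(n-1)|v|^2\,dx$. Under the hypotheses the two sides have strictly opposite sign unless $v=0$: when $A-I>0$ and $n-1<0$ the left side is $\ge0$, the right side $\le0$, so both vanish and $n-1\le -c<0$ forces $v=0$; the case $A-I<0$, $n-1>0$ is symmetric. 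This is precisely where the opposite‑sign hypotheses on $A-I$ and $n-1$ enter. With items~(1)--(4) in hand, $T$ satisfies the hypotheses of Theorem~\ref{fmthm2}.
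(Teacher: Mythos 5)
Your proof is correct, and it is worth pointing out that the paper does not actually prove Theorem \ref{T-analysis}: it defers entirely to the far-field analysis of \cite{fmdefect}, so what you supply is the near-field adaptation that the paper leaves implicit. Your computational backbone --- evaluating \eqref{tk} at $\varphi=w|_D$ and testing the variational forms of \eqref{sourcepw} and \eqref{hadjointform0} with $\varphi=u$ --- reproduces exactly the identity \eqref{vart1} that the paper only derives later, in the proof of the absorbing-case Theorem \ref{tkanalysis}, with the Dirichlet-to-Neumann term on $\partial B_R$ taking over the role of the far-field norm in \cite{fmdefect}; the two-identity device for the two sign regimes of $A-I$ (the $w$-identity when $A-I>0$, the $v$-identity from \eqref{hadjointform0} when $A-I<0$) is the same mechanism as in that reference, and your sign-based injectivity argument matches the stated hypotheses without invoking Assumption \ref{note}. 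Two spots deserve one more line each, though neither is a gap: (i) to pass from the bound $\Re(Tv,v)_{H^1(D)}\ge c\|v\|^2_{H^1(D)}+(Kv,v)$ to the stated decomposition, use that $\Re(T)$ is automatically self-adjoint, so $\Re(T)-K$ is self-adjoint and coercive; one should also note that the compact remainder can be taken self-adjoint, which is immediate since each remainder form is real-valued and of the type $F^*MF$ with $F$ compact; (ii) for the compactness of $\Im(T)$ you need the polarized sesquilinear form, not just the diagonal, to factor through compact maps --- it does, because with $A$, $n$ real all volume terms in \eqref{vart1} are Hermitian and cancel, leaving only the $\partial B_R$ term; alternatively, since $u$ solves the homogeneous Helmholtz equation near $\partial B_R$, interior elliptic regularity makes $v\mapsto u|_{\partial B_R}$ itself compact, which disposes of both the compactness of $\Im(T)$ and the $\mathbb{T}_k$-term in $\Re(T)$ without invoking the smoothing of $\Im(\mathbb{T}_k)$ or the splitting of $\Re(\mathbb{T}_k)$.
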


The above result gives that the operator $T$ for a non absorbing media satisfies the assumptions of Theorem \ref{fmthm2}. 
%
%
We now show that for an absorbing media that the imaginary part of $T$ is coercive. } 
Recall that if $u\in H^1_{loc}(\R^d)$ is a radiating solution to the exterior Helmholtz equation then we have that $u$ satisfies that following expansion 
\begin{align} 
u(x)=\frac{\text{e}^{\text{i}k|x|}}{|x|^{\frac{d-1}{2}}} \left\{u^{\infty}(\hat{x}) + \mathcal{O} \left( \frac{1}{|x|}\right) \right\}\; \textrm{  as  } \;  |x| \to \infty ,\label{ffpattern}
\end{align}
where $\hat x:=x/|x|$ and $u^{\infty}(\hat x) $ is the corresponding far field pattern (see \cite{coltonkress} in ${\mathbb R}^3$ and \cite{p1} in ${\mathbb R}^2$). 

{
\begin{theorem} \label{tkanalysis}
 Assume that $\Im(A)$ is uniformly negative definite and $\Im(n) $ is uniformly positive in $D$. Then the operator $-\Im(T) : H^1(D) \longmapsto  H^1(D)$ is coercive.
 \end{theorem}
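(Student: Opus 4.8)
The plan is to establish the coercivity through the standard ``main identity'' of the factorization method, and then to close a gap by a compactness argument, since that identity naturally produces a lower bound in terms of $\|v+u\|_{H^1(D)}$ rather than $\|v\|_{H^1(D)}$.

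First I would derive a closed expression for $\Im(Tv,v)_{H^1(D)}$. Writing $w:=v+u$ with $u$ the radiating solution of \eqref{sourcep1} associated with $v$, split $(Tv,v)_{H^1(D)}=(Tv,w|_D)_{H^1(D)}-(Tv,u|_D)_{H^1(D)}$. The first term is read off directly from the definition \eqref{tk}, namely $(Tv,w|_D)_{H^1(D)}=-\int_D(I-A)\nabla w\cdot\nabla\overline w-k^2(1-n)|w|^2\,dx$, while the second is obtained by inserting $\varphi=u$ into the variational identity for \eqref{sourcepw}, which gives $(Tv,u|_D)_{H^1(D)}=-\int_{B_R}|\nabla u|^2-k^2|u|^2\,dx+\int_{\partial B_R}\overline u\,\mathbb{T}_k u\,ds$. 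Taking imaginary parts, using that $A$ is symmetric so that $\Im(A\nabla w\cdot\nabla\overline w)=\Im(A)\nabla w\cdot\nabla\overline w$ pointwise, and that the $B_R$-volume terms are real, one is left with
\begin{align*}
-\Im(Tv,v)_{H^1(D)}=-\int_D\Im(A)\nabla w\cdot\nabla\overline w\,dx+k^2\int_D\Im(n)\,|w|^2\,dx+\Im\int_{\partial B_R}\overline u\,\mathbb{T}_k u\,ds.
\end{align*}
Since $u$ is a radiating solution of the Helmholtz equation outside $B_R$, the last term equals $k\,\|u^\infty\|^2_{L^2}\ge 0$ by a standard Rellich/energy computation, and the hypotheses $\Im(A)\le -c_A I$, $\Im(n)\ge c_n>0$ in $D$ then give $-\Im(Tv,v)_{H^1(D)}\ge c\,\|w\|^2_{H^1(D)}+k\,\|u^\infty\|^2_{L^2}$ for some $c>0$.

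It remains to pass from a bound in $\|w\|_{H^1(D)}$ to one in $\|v\|_{H^1(D)}$, and here I would argue by contradiction. If $-\Im(T)$ were not coercive, pick $v_n$ with $\|v_n\|_{H^1(D)}=1$ and $-\Im(Tv_n,v_n)_{H^1(D)}\to 0$; then $w_n=v_n+u_n\to 0$ strongly in $H^1(D)$ and $u_n^\infty\to 0$ in $L^2$, so by Rellich's lemma and unique continuation $u_n\to 0$ in $H^1_{loc}(\R^d\setminus\overline D)$. By the a priori bound for \eqref{sourcep1} the $u_n$ are bounded in $H^1_{loc}(\R^d)$; extracting weak limits $v_n\weakc v_*$ in $H^1(D)$ and $u_n\weakc u_*$ in $H^1_{loc}(\R^d)$ (so $u_*=0$ outside $\overline D$ and $v_*=-u_*|_D$), and passing to the limit in the variational form \eqref{hadjointform0}, $u_*$ solves \eqref{sourcep1} with datum $v_*$. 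Because $w_*=v_*+u_*|_D=0$, the contrast source on the right of \eqref{sourcepw} vanishes, so $u_*$ is a radiating solution of the Helmholtz equation on all of $\R^d$, hence $u_*\equiv 0$ and $v_*=0$. Finally, testing \eqref{sourcep1} over $D$ against $u_n$, using the transmission condition to replace the interior conormal trace by $\partial_\nu u_n^{(+)}\to 0$ in $H^{-1/2}(\partial D)$, writing $\nabla u_n=\nabla w_n-\nabla v_n$ with $\nabla w_n\to 0$ in $L^2(D)$, and using the compact embedding $H^1(D)\hookrightarrow L^2(D)$ (which yields $u_n,v_n\to 0$ in $L^2(D)$ from $u_*=v_*=0$), the identity collapses to $\|\nabla v_n\|^2_{L^2(D)}\to 0$. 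Together with $v_n\to 0$ in $L^2(D)$ this forces $\|v_n\|_{H^1(D)}\to 0$, contradicting $\|v_n\|_{H^1(D)}=1$.

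The routine part is the first step (tracking signs in the main identity and justifying the boundary pairings via $H(\mathrm{div})$-traces of $A\nabla u-(I-A)\nabla v$). The genuine difficulty is the last step: the energy estimate only controls $w=v+u$, and $u$ may nearly cancel $v$, so one must use the structure of the scattering problem — radiation through the far-field term, the transmission conditions, and compactness — to recover control of $v$ itself. I would also note that this is the absorbing counterpart of Theorem \ref{T-analysis}: the main identity is the same, and only which part (imaginary versus real) is exploited through the sign conditions changes.
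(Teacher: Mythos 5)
Your first half coincides with the paper's computation: the identity you derive for $-\Im(Tv,v)_{H^1(D)}$ is exactly \eqref{im0}, and the resulting lower bound $c\,\|v+u\|^2_{H^1(D)}+k\|u^\infty\|^2_{L^2(\mathbb{S})}$ is correct under the stated sign assumptions. The gap is in your final step. From what you have actually established --- $w_n=v_n+u_n\to 0$ strongly in $H^1(D)$, $u_n\rightharpoonup 0$ in $H^1_{loc}$, $u_n\to 0$ in $L^2_{loc}$, $u_n^\infty\to 0$ --- it does \emph{not} follow that $\partial_\nu u_n^{(+)}\to 0$ strongly in $H^{-1/2}(\partial D)$. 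The exterior Neumann trace is only weakly convergent: $u_n|_{\partial D}=w_n|_{\partial D}-v_n|_{\partial D}$ converges only weakly in $H^{1/2}(\partial D)$ because $v_n$ is only weakly convergent, and the exterior Dirichlet-to-Neumann map is bounded but not compact; strong $H^1$ convergence of $u_n$ up to $\partial D$ from outside is essentially the statement you are trying to prove, so the argument becomes circular here. Without strong convergence of one factor, the boundary pairing $\int_{\partial D}\overline{u_n}\,\partial_\nu u_n^{(+)}\,ds$ in your Green identity is a product of two merely weakly convergent traces and need not vanish (pairing the strongly $L^2(\partial D)$-convergent Dirichlet trace against a sequence bounded only in $H^{-1/2}$ does not help). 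Your earlier sentence ``by Rellich's lemma and unique continuation $u_n\to 0$ in $H^1_{loc}(\R^d\setminus\overline D)$'' has the same defect: those tools identify the \emph{weak} limit as zero, not strong convergence of the sequence.

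The paper closes the argument with one observation that makes all of this unnecessary: \eqref{sourcepw} exhibits $u$ as the radiating solution of a Helmholtz source problem whose data involve only $\phi=v+u$, so well-posedness of that problem gives $\|u\|_{H^1(B_R)}\le C\|\phi\|_{H^1(D)}$. Hence $\phi_\ell\to 0$ in $H^1(D)$ immediately forces $u_\ell\to 0$ in $H^1(B_R)$, and then $v_\ell=\phi_\ell-u_\ell\to 0$, contradicting $\|v_\ell\|_{H^1(D)}=1$; no weak limits, Rellich lemma, unique continuation or trace analysis are needed. If you prefer to keep your route, the repair is to push the Green identity into the exterior domain: using $\int_{\partial D}\overline{u_n}\,\partial_\nu u_n^{(+)}\,ds=-\int_{B_R\setminus\overline D}\big(|\nabla u_n|^2-k^2|u_n|^2\big)\,dx+\int_{\partial B_R}\overline{u_n}\,\partial_r u_n\,ds$, the exterior gradient term enters with a favorable sign, and the remaining terms vanish by Rellich's compactness and interior elliptic estimates near $\partial B_R$, which then yields $\|\nabla v_n\|_{L^2(D)}\to 0$ as you intended --- but as written, the step claiming $\partial_\nu u_n^{(+)}\to 0$ in $H^{-1/2}(\partial D)$ is an unproved and nontrivial assertion.
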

 }
 \begin{proof}
By \eqref{sourcep1} we have that for any $v_j \in H^1(D)$ there is a unique solution $u_j \in H^1_{loc}(\R^d)$. Now letting $\phi_j=v_j+u_j$ and using \eqref{tk} we have that 
\begin{align*}
&\hspace*{-1cm}(T v_1, v_2)_{ H^1(D) }= - \int\limits_{D} (I-A) \nabla \phi_1 \cdot \nabla \overline{ (\phi_2-u_2) }-k^2 (1-n)\phi_1 \overline{ (\phi_2-u_2 )} \, dx\\
&\hspace*{1cm}= - \int\limits_{D} (I-A) \nabla \phi_1 \cdot \nabla \overline{ \phi_2 }-k^2 (1-n)\phi_1 \overline{ \phi_2} \, dx\\
&\hspace*{3cm}+ \int\limits_{D} (I-A) \nabla \phi_1 \cdot \nabla \overline{ u_2 }-k^2 (1-n)\phi_1 \overline{ u_2} \, dx.
\end{align*}
Now by equation \eqref{sourcep1}, we have that  
\begin{align*}
\Delta u_1 +k^2  u_1=\nabla \cdot (I-A)  \nabla \phi_1 +k^2(1-n)\phi_1 \quad \text{ in } \quad    \R^d ,
\end{align*}
and therefore multiplying by $\overline{u_2}$ and integrating by parts over $B_R$ such that $D \subset B_R$ gives that 
\begin{align*}
&\hspace*{-0.5cm}-\int\limits_{B_R} A\nabla u_1 \cdot  \nabla \overline{u_2} -k^2 n u_1 \overline{u_2}\, dx +\int\limits_{\partial B_R} \overline{u_2}  \frac{\partial u_1}{\partial \nu}\, ds \\
 &\hspace*{5cm}= - \int\limits_{D} (I-A) \nabla \phi_1 \cdot \nabla \overline{ u_2 } - k^2  \int\limits_{D} (1-n) \phi_1 \overline{ u_2 } \, dx. 
\end{align*}
This gives that  
\begin{align}
&\hspace*{-1cm}(T v_1, v_2)_{ H^1(D ) }= - \int\limits_{D} (I-A) \nabla \phi_1 \cdot \nabla \overline{ \phi_2 }-k^2 (1-n)\phi_1 \overline{ \phi_2} \, dx \nonumber \\
&\hspace*{3cm}+\int\limits_{B_R} \nabla u_1 \cdot  \nabla \overline{u_2} -k^2  u_1 \overline{u_2}\, dx -\int\limits_{\partial B_R} \overline{u_2}  \frac{\partial u_1}{\partial \nu}\, ds.    \label{vart1}
\end{align}
Now take the imaginary part of equation (\ref{vart1}) with $u_2 =u_1$ and $\phi_2 =\phi_1$. Furthermore by letting $R \rightarrow \infty$, we obtain that 
 \begin{equation}\label{im0}
 \big(\Im (T) v_1, v_1 \big)_{ H^1(D  ) } =  \int\limits_{D} \Im(A) \nabla \phi_1 \cdot \nabla \overline{\phi_1}-k^2 \Im(n) |\phi_1|^2 \, dx- k \int\limits_{\mathbb{S} }|u^{\infty}_1|^2 \, ds(\hat{x}) 
 \end{equation}
where the far field pattern $u_1^\infty$ is defined by the asymptotic expansion in \eqref{ffpattern} and $ \mathbb{S}=\{ {x} \in \R^d\,  :  \, |{x}|=1 \}$ is the unit circle or sphere. 

{
We are going to prove coercivity by using a contradiction argument, therefore assume that $\exists \, v_\ell \in H^1(D)$ such that 
$$\| v_\ell \|_{H^1(D)}=1\quad  \text{where}  \quad \big|( \Im(T) v_\ell, v_\ell )_{ H^1(D) } \big| \longrightarrow 0 \quad \text{as} \quad \ell \rightarrow \infty.$$ 
This gives that $u_\ell$ defined by solving \eqref{sourcep1} with source $v=v_\ell$ is bounded in $H^1_{loc}(\R^m)$.
Using equation \eqref{im0}  as well as the fact that $\Im(A)$ is uniformly negative definite and $\Im(n) $ is uniformly positive in $D$ we conclude that $\phi_\ell \rightarrow 0$ in $H^1(D)$. By the well-posedness of \eqref{sourcepw} we have that 
$$\| u_\ell\|_{H^1(B_R)} \leq C \| \phi_\ell\|_{H^1(D)}  \longrightarrow 0 \quad \text{as} \quad \ell \rightarrow \infty.$$
Now since $v_\ell=\phi_\ell - u_\ell$ we have that $ v_\ell \rightarrow 0$ in $H^1(D)$ which gives that result by contradiction since $||v_\ell ||_{H^1(D)}=1$. 
 }
\end{proof}

{
\begin{corollary} 
\begin{enumerate}
\item Assume $\Im(A) =0$, $\Im(n) =0$ in $D$ and $A-I$ is positive (or negative) definite  uniformly in $D$ and $n-1$ is negative (or positive) uniformly in $D$. Then  $N_{\sharp}=  \big|\Re({N})\big| +\big|\Im({N})\big| $ is positive, compact, injective and has a dense range.
\item Assume that $\Im(A)$ is uniformly negative definite and $\Im(n) $ is uniformly positive in $D$. Then $N_{\sharp}= \sigma \Im({N})$ is positive, compact, injective and has a dense range where $\sigma =-1$.
\end{enumerate}  
\end{corollary}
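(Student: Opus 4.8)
The plan is to deduce all four properties of $N_\sharp$ directly from the factorization $N = H^{*}TH$ established in the preceding lemma: in each of the two regimes I would verify the hypotheses of the abstract factorization theorem (Theorem~\ref{fmthm2}) on the outer operator $H$ and the middle operator $T$, obtain from it both positivity of $N_\sharp$ and the range identity $\text{Range}(N_\sharp^{1/2}) = \text{Range}(H^{*})$, and then convert these into the listed properties.

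Two observations about $H$ feed into both cases. By Theorem~\ref{rangeh1}, $H : L^2(C) \to H^1(D)$ is compact and injective, so $N = H^{*}TH$, and hence $\Re(N) = H^{*}\Re(T)H$, $\Im(N) = H^{*}\Im(T)H$, and $N_\sharp$ in either of its two definitions, are all compact. Moreover, injectivity of $H$ means $\text{Range}(H^{*})$ is dense in $L^2(C)$; since a self-adjoint operator has dense range exactly when it is injective, once the identity $\text{Range}(N_\sharp^{1/2}) = \text{Range}(H^{*})$ is in hand it follows that $N_\sharp^{1/2}$, and therefore $N_\sharp$, is injective with dense range. So in each case it remains only to check the hypotheses of Theorem~\ref{fmthm2} so as to secure positivity and the range identity.

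For part (1), the structural properties of $T$ that are needed are precisely those furnished by Theorem~\ref{T-analysis}: $T$ is injective, $\Im(T)$ is compact and non-positive, and $\Re(T)$ — or $-\Re(T)$, according to which of the definiteness alternatives for $A-I$, $n-1$ holds — is a self-adjoint coercive operator up to a compact perturbation. Feeding this together with the properties of $H$ into Theorem~\ref{fmthm2} for the choice $N_\sharp = |\Re(N)| + |\Im(N)|$ (the same operator in both sub-cases) gives that $N_\sharp$ is positive and $\text{Range}(N_\sharp^{1/2}) = \text{Range}(H^{*})$, which by the previous paragraph completes (1). For part (2), Theorem~\ref{tkanalysis} gives that $-\Im(T)$ is coercive, hence in particular self-adjoint and positive, and injective (if $Tv = 0$ then $\big(-\Im(T)v,v\big)_{H^1(D)} = 0$ forces $v = 0$). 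Since $N_\sharp = \sigma\,\Im(N) = -\Im(N) = H^{*}\big(-\Im(T)\big)H$ with $\sigma = -1$, the range identity for a factorization through a coercive positive self-adjoint middle operator (the version of Theorem~\ref{fmthm2} relevant here, via $N_\sharp = \big((-\Im(T))^{1/2}H\big)^{*}\big((-\Im(T))^{1/2}H\big)$) yields $\text{Range}(N_\sharp^{1/2}) = \text{Range}(H^{*})$; positivity of $N_\sharp$ is immediate, and the remaining properties follow from the common step above.

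I expect the only delicate point to be bookkeeping rather than new analysis: one must track the sign of the coercive part of $T$ so that $N_\sharp$ itself, and not $-N_\sharp$, comes out positive — which is automatic because $|\Re(N)| + |\Im(N)|$ is insensitive to that sign in the non-absorbing case, while $\sigma$ is chosen precisely to absorb it in the absorbing case — and one must match the exact hypotheses of Theorem~\ref{fmthm2} (in particular any density requirement on the range of $H$), which is covered by Theorem~\ref{rangeh1}. Everything else is a direct assembly of results already proved.
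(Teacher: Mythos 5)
Your proposal is correct and follows essentially the same route the paper intends: the corollary is exactly the assembly of Theorem \ref{rangeh1} ($H$ compact and injective), Theorem \ref{T-analysis} resp.\ Theorem \ref{tkanalysis} for the middle operator, and the abstract range identity (Theorem \ref{fmthm2}, resp.\ the classical factorization result for a self-adjoint coercive middle operator cited as Theorem 4.1 of \cite{factor-music}), with compactness coming from $N=H^*TH$ and injectivity/dense range of $N_\sharp$ deduced from $\text{Range}(N_\sharp^{1/2})=\text{Range}(H^*)$ together with injectivity of $H$. Your handling of the absorbing case via $N_\sharp=H^*(-\Im(T))H$ with $(-\Im(T))^{1/2}$ an isomorphism is a correct (and standard) way to get that range identity, matching the paper's intended argument.
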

 }

\subsubsection{Characterization of the scattering object } 
We now have all we need to prove the main results of this paper. 
{
Using Theorem \ref{fmthm2} along with Picard's criteria we will construct an indicator function to reconstruct the support of $D$ via the factorization method which depends on the analysis in the previous section. We also connect a modified linear sampling to the factorization method. The indicator function proposed for modified linear sampling here is motivated by the work done in \cite{GLSM}. It is shown in \cite{GLSM} that the `minimizer' $g_z^{\eps}$ of the functional 
$$ \mathcal{J}_{\eps} \big( \Phi( \cdot \, , z) \, ; \, g_z \big)  = \eps \big( N_{\sharp} \, g_z  , g_z  \big)_{L^2(C)}   +\|  Ng_z- \Phi( \cdot \, , z) \|^2_{L^2(C)}$$
(recall that the operator $N_{\sharp}$ is positive and injective) can be used to derive an indicator function given by 
$$\big( N_{\sharp} \, g_z^{\eps}  , g_z^{\eps}  \big)_{L^2(C)} = \big\| N_{\sharp}^{1/2} g_z^{\eps} \big\|^2_{L^2(C)} $$ 
which tends to infinity as the regularization parameter $\eps$ tends to zero for $z \notin D$ and is bounded otherwise. The square root of the operator is defined by the spectral decomposition where 
\begin{align}
N_{\sharp}^{1/2} g = \sum \lambda_j^{1/2} (g, \psi_j) \psi_j \label{squareroot}
\end{align}
for all $g \in L^2(C)$ where  $(\lambda_j , \psi_j) \in \R^+ \times L^2(C)$ is the  orthonormal eigensystem of $N_{\sharp}$. Here we extend this idea and show that any regularized solution to 
$$ N_{\sharp} \, g_z = \Phi( \cdot \, , z) $$
satisfies that $\big\| N_{\sharp}^{1/2} g_z^{\eps} \big\|_{L^2(C)}$ is bounded as the regularization parameter $\eps$ tends to zero if and only if $z \in D$.
}
In this section we will consider two cases, i.e. for an absorbing and non-absording obstacle. We will state the main results separately for each case for simplicity.

\begin{theorem} [Range test for a non-absorbing media]\label{maintheorem1}
Assume that $\Im(A)=0$ and $\Im (n)=0$ in $D$. {If $A-I$ is positive (or negative) definite  uniformly and $n-1$ is negative (or positive) uniformly in $D$} then for $z \in \Omega$ 
$$\Phi( \cdot \, , z) \in \text{Range} \big(N_\sharp^{1/2} \big)\quad  \text{ if and only if} \quad z \in D$$
where $N_\sharp = |\Re(N)| +|\Im(N)|$.
\end{theorem}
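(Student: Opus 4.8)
The plan is to derive Theorem \ref{maintheorem1} as a direct application of the abstract factorization machinery (Theorem \ref{fmthm2}) to the factorization $N=H^*TH$ established earlier, using the operator-theoretic properties already verified for $H$ and $T$. First I would recall that Theorem \ref{rangeh1} shows $H:L^2(C)\to H^1(D)$ is compact and injective, and that Theorem \ref{rangeh} identifies $\text{Range}(H^*)$ with the set $\{\Phi(\cdot,z):z\in D\}$ in the sense that $\Phi(\cdot,z)\in\text{Range}(H^*)$ if and only if $z\in D$ (this uses Assumption \ref{note}, that $k$ is not a transmission eigenvalue). Next I would invoke Theorem \ref{T-analysis}: under the stated sign conditions on $A-I$ and $n-1$ in the non-absorbing case, $T$ is injective, $\Im(T)$ is compact and has a fixed sign, and $\pm\Re(T)$ is a compact perturbation of a self-adjoint coercive operator. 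These are precisely the hypotheses under which the abstract theorem of Kirsch--Grinberg type (Theorem \ref{fmthm2}, via \cite{kirschbook} and/or \cite{armin}) applies with the choice $N_\sharp=|\Re(N)|+|\Im(N)|$.

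The key steps, in order, are: (1) Confirm $N_\sharp=|\Re(N)|+|\Im(N)|$ is a positive, self-adjoint, compact operator with dense range — this is exactly the content of the Corollary stated just above, part 1, so I would simply cite it. (2) Apply Theorem \ref{fmthm2} to the factorization $N=H^*TH$ with the verified properties of $H$ (compact, injective, so $H^*$ has dense range) and $T$ (the coercivity-plus-compact and sign conditions), which yields the operator identity $\text{Range}(N_\sharp^{1/2})=\text{Range}(H^*)$. (3) Combine this range identity with Theorem \ref{rangeh} to conclude that for $z\in\Omega$, $\Phi(\cdot,z)\in\text{Range}(N_\sharp^{1/2})$ if and only if $\Phi(\cdot,z)\in\text{Range}(H^*)$ if and only if $z\in D$. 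That chain of equivalences is the assertion of the theorem.

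The only real subtlety — and what I would regard as the main obstacle worth spelling out rather than the routine citation work — is checking that the sign hypotheses chosen for $T$ in Theorem \ref{T-analysis} are consistent with, and indeed imply, the exact structural assumptions demanded by the abstract factorization theorem (e.g. that the self-adjoint coercive part of $\Re(T)$ has the correct sign relative to the sign convention in $N_\sharp$, and that $\Im(T)\le 0$ matches the orientation needed so that $|\Re(N)|+|\Im(N)|$ — rather than some signed combination — is the right choice). One must track carefully that "$A-I$ positive definite" pairs with "$n-1$ negative" (and the opposite pairing) so that $\Re(T)$ has a definite-sign coercive part; the two admissible sign combinations are handled symmetrically by replacing $T$ with $-T$, which does not affect $N_\sharp$. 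Once that bookkeeping is done, the theorem follows with no further computation.

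\begin{proof}
By Theorem \ref{rangeh1}, the operator $H:L^2(C)\to H^1(D)$ is compact and injective, so $H^*$ has dense range. Under the stated assumptions $\Im(A)=0$, $\Im(n)=0$ in $D$ with $A-I$ positive (respectively negative) definite uniformly and $n-1$ negative (respectively positive) uniformly in $D$, Theorem \ref{T-analysis} gives that $T:H^1(D)\to H^1(D)$ is injective, $\Im(T)$ is compact and non-positive, and $\Re(T)$ (respectively $-\Re(T)$) is the sum of a compact operator and a self-adjoint coercive operator. Replacing $T$ by $-T$ if necessary in the negative-definite case — which leaves $N_\sharp=|\Re(N)|+|\Im(N)|$ unchanged — we may assume $A-I$ is uniformly positive definite and $n-1$ uniformly negative.

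These are exactly the hypotheses of the abstract range identity in Theorem \ref{fmthm2} applied to the factorization $N=H^*TH$ from the previous section. Together with the first part of the Corollary above, which guarantees that $N_\sharp=|\Re(N)|+|\Im(N)|$ is positive, compact, injective and has dense range, Theorem \ref{fmthm2} yields
$$\text{Range}\big(N_\sharp^{1/2}\big)=\text{Range}(H^*).$$
Finally, by Theorem \ref{rangeh}, under Assumption \ref{note} we have for $z\in\Omega$ that $\Phi(\cdot\,,z)\in\text{Range}(H^*)$ if and only if $z\in D$. Combining the last two statements gives that $\Phi(\cdot\,,z)\in\text{Range}\big(N_\sharp^{1/2}\big)$ if and only if $z\in D$, which is the claim.
\end{proof}
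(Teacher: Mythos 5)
Your proposal is correct and follows essentially the same route as the paper: the paper's proof is a one-line citation combining Theorem \ref{fmthm2} with the properties of $H$ (Theorem \ref{rangeh1}) and $T$ (Theorem \ref{T-analysis}) to get $\text{Range}(N_\sharp^{1/2})=\text{Range}(H^*)$, and then the characterization $\Phi(\cdot\,,z)\in\text{Range}(H^*)\iff z\in D$ from Theorem \ref{rangeh}. Your version merely spells out the sign bookkeeping (the $T\mapsto -T$ replacement in the negative-definite case) that the paper leaves implicit, which is a fine addition but not a different argument.
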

\begin{proof}
The result follows from the fact that Theorem \ref{fmthm2} along with Theorems \ref{rangeh} and \ref{T-analysis} gives that $\text{Range}(N_\sharp^{1/2} ) =\text{Range}(H^*)$ and then by appealing to Theorem \ref{rangeh1} proves the claim. 
\end{proof}

We now give a range test for an absorbing media. 

\begin{theorem} [Range test for an absorbing media]\label{maintheorem2}
Assume that {$\Im(A)$ is uniformly negative definite and $\Im(n) $ is uniformly positive in $D$} then we have that for $z \in \Omega$ 
$$\Phi( \cdot \, , z) \in \text{Range}\big(N_\sharp^{1/2} \big)\quad  \text{ if and only if} \quad z \in D$$
where {$N_\sharp = \sigma \Im({N})$ with $\sigma =-1$.}
\end{theorem}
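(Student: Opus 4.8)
The plan is to mirror exactly the argument used in Theorem \ref{maintheorem1}, replacing the non-absorbing analysis of $T$ (Theorem \ref{T-analysis}) with the absorbing analysis provided by Theorem \ref{tkanalysis}. First I would recall the factorization $N = H^*TH$ from the preceding lemma, and recall from the corollary that in the absorbing case the operator $N_\sharp = \sigma\Im(N) = -\Im(N)$ is positive, compact, injective and has dense range, so that $N_\sharp^{1/2}$ is well-defined via the spectral decomposition \eqref{squareroot}. The goal is then to invoke the abstract factorization theorem (Theorem \ref{fmthm2}), whose hypotheses are: $H$ compact and injective (supplied by Theorem \ref{rangeh1}), and the middle operator $T$ satisfying the relevant coercivity/sign condition — here it is $-\Im(T)$ coercive, which is precisely the content of Theorem \ref{tkanalysis}. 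One should also note that $T$ is injective; this follows from the same source problem reasoning (if $Tv=0$ then the associated $\phi=v+u$ has vanishing far field by \eqref{im0} and the coercivity of $-\Im(T)$, forcing $v=0$) — or it can simply be cited from \cite{fmdefect} as in the non-absorbing case.

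The key steps, in order, are: (1) verify the hypotheses of Theorem \ref{fmthm2} are met in the absorbing regime, namely $H$ compact and injective (Theorem \ref{rangeh1}), $T$ injective, and $-\Im(T)$ coercive with $\Im(T)$ compact (Theorem \ref{tkanalysis} together with the compactness observation that $\Im(T)$ differs from $T$-type operators only by terms controlled by the far-field map, which is smoothing); (2) conclude from Theorem \ref{fmthm2} that $\text{Range}(N_\sharp^{1/2}) = \text{Range}(H^*)$ with $N_\sharp = -\Im(N)$; (3) apply Theorem \ref{rangeh}, which characterizes $\text{Range}(H^*)$ by $\Phi(\cdot\,,z) \in \text{Range}(H^*) \iff z \in D$ for $z \in \Omega$, valid under Assumption \ref{note} that $k$ is not a transmission eigenvalue; (4) chain the two equivalences to obtain the stated range test. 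This is the same four-line proof structure as Theorem \ref{maintheorem1}, just swapping Theorem \ref{T-analysis} for Theorems \ref{tkanalysis}.

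The only genuine subtlety — and the step I would flag as the main obstacle — is checking that the abstract theorem's version applicable to $N_\sharp = \sigma\Im(N)$ really does accept $-\Im(T)$ coercive as its hypothesis on the middle operator, as opposed to needing a full coercivity of $T$ itself or a decomposition of $\Re(T)$. In Kirsch–Grinberg-type theory there are two distinct variants of the factorization theorem: one for real $k$ with non-absorbing media (using $|\Re(N)| + |\Im(N)|$ and a Gårding-type decomposition of $T$), and one for absorbing media (using $\sigma\Im(N)$ and coercivity of $\sigma\Im(T)$). I would make explicit that we are invoking the latter variant, so that Theorem \ref{tkanalysis} supplies exactly the needed hypothesis, and I would also confirm that injectivity of $T$ is available so that the range identity is not degenerate. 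Assuming that book-keeping is in place, the proof is immediate.

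\begin{proof}
By the corollary above, under the assumption that $\Im(A)$ is uniformly negative definite and $\Im(n)$ is uniformly positive in $D$, the operator $N_\sharp = \sigma\Im(N)$ with $\sigma = -1$ is positive, compact, injective and has dense range, so $N_\sharp^{1/2}$ is well-defined by \eqref{squareroot}. We apply the (absorbing-media version of the) abstract factorization result, Theorem \ref{fmthm2}, to the factorization $N = H^*TH$. Its hypotheses hold: $H : L^2(C) \longmapsto H^1(D)$ is compact and injective by Theorem \ref{rangeh1}; the middle operator $T$ is injective (by the same argument as in \cite{fmdefect}, using \eqref{im0} and the coercivity below); and $-\Im(T) : H^1(D) \longmapsto H^1(D)$ is coercive by Theorem \ref{tkanalysis}, while $\Im(T)$ is compact. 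Hence Theorem \ref{fmthm2} yields
$$\text{Range}\big(N_\sharp^{1/2}\big) = \text{Range}(H^*).$$
Finally, by Theorem \ref{rangeh}, which is valid under Assumption \ref{note}, for $z \in \Omega$ we have $\Phi(\cdot\,,z) \in \text{Range}(H^*)$ if and only if $z \in D$. Combining the two equivalences proves the claim.
\end{proof}
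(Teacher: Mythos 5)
Your proposal is correct and follows essentially the same route as the paper: combine the coercivity of $-\Im(T)$ (Theorem \ref{tkanalysis}) with the compactness and injectivity of $H$ (Theorem \ref{rangeh1}) to get $\text{Range}(N_\sharp^{1/2})=\text{Range}(H^*)$, then conclude with the characterization of $\text{Range}(H^*)$ (Theorem \ref{rangeh}). The subtlety you flagged is resolved exactly as you anticipated: the paper does not use the appendix Theorem \ref{fmthm2} here but invokes the absorbing-media range identity (Theorem 4.1 of \cite{factor-music}) applied to the imaginary part, i.e.\ to the factorization $\Im(N)=H^*\Im(T)H$.
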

\begin{proof}
{
Similarly we have that by using Theorem 4.1  in \cite{factor-music} for the imaginary part of the operator. Then the result follows from Theorems \ref{rangeh} and \ref{tkanalysis} gives that $\text{Range}(N_\sharp^{1/2} ) =\text{Range}(H^*)$ and then by appealing to Theorem \ref{rangeh1} proves the claim.  
}
\end{proof}

{
In either case we 
}
now let $\lambda_j  \in \R^+$ and $ \psi_j \in  L^2(C)$ be an orthonormal eigensystem of the {positive,} self-adjoint compact operator ${N}_{\sharp}$ then applying Picard's criterion (see for e.g. Theorem 2.7 of \cite{p1}) to Theorems \ref{maintheorem1} or \ref{maintheorem2} gives the following. 
{
\begin{corollary}\label{indicator1}
Assume that the assumptions of Theorem \ref{maintheorem1} or \ref{maintheorem2} are satisfied then for all $z \in \Omega$ 
$$\sum\limits_{j=1}^{\infty} \frac{\big| \big( \Phi(\cdot \, , z ) \,,\psi_j  \big)_{L^2(C)} \big|^2}{ \lambda_j }  < \infty \quad \text{if and only if } \quad z \in D .$$
\end{corollary}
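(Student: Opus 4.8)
The plan is to derive Corollary \ref{indicator1} directly from the range characterizations in Theorems \ref{maintheorem1} and \ref{maintheorem2} by invoking Picard's criterion. The key observation is that in both cases the operator $N_\sharp$ (whether $|\Re(N)|+|\Im(N)|$ or $-\Im(N)$) is a positive, self-adjoint, compact operator on the Hilbert space $L^2(C)$, as recorded in the Corollary following Theorem \ref{tkanalysis}. Hence by the Hilbert--Schmidt theorem it admits an orthonormal eigensystem $(\lambda_j,\psi_j)\in\R^+\times L^2(C)$ with $\lambda_j\to 0$, and its square root $N_\sharp^{1/2}$ is well-defined by the spectral formula \eqref{squareroot}, itself a positive compact self-adjoint operator with eigensystem $(\lambda_j^{1/2},\psi_j)$.

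First I would recall Picard's criterion in the form stated in Theorem 2.7 of \cite{p1}: for a compact operator with singular system $(\mu_j, \varphi_j, \psi_j)$, an element $\phi$ lies in the range of the operator if and only if $\phi$ is orthogonal to the kernel and $\sum_j \mu_j^{-2}|(\phi,\psi_j)|^2<\infty$. Applied to $N_\sharp^{1/2}$, whose singular values are exactly the eigenvalues $\lambda_j^{1/2}$ (since $N_\sharp^{1/2}$ is positive self-adjoint, left and right singular vectors coincide with the eigenvectors $\psi_j$), this says that $\Phi(\cdot\,,z)\in\text{Range}(N_\sharp^{1/2})$ if and only if
\[
\sum_{j=1}^\infty \frac{\big|\big(\Phi(\cdot\,,z),\psi_j\big)_{L^2(C)}\big|^2}{\big(\lambda_j^{1/2}\big)^2}=\sum_{j=1}^\infty \frac{\big|\big(\Phi(\cdot\,,z),\psi_j\big)_{L^2(C)}\big|^2}{\lambda_j}<\infty,
\]
together with the orthogonality condition $\Phi(\cdot\,,z)\perp\text{Null}(N_\sharp^{1/2})$. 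Since $N_\sharp$ has dense range (again by the Corollary after Theorem \ref{tkanalysis}), it is injective, so $N_\sharp^{1/2}$ is injective and its null space is trivial; the orthogonality condition is therefore vacuous and drops out. Combining this equivalence with Theorem \ref{maintheorem1} (non-absorbing case) or Theorem \ref{maintheorem2} (absorbing case), which identify $\Phi(\cdot\,,z)\in\text{Range}(N_\sharp^{1/2})$ with $z\in D$, yields the stated series criterion.

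There is essentially no hard step here: the result is a packaging of the functional-analytic machinery already in place. The only point requiring a little care is to be explicit that the singular values of the positive square-root operator are the $\lambda_j^{1/2}$ and that the denominators $(\lambda_j^{1/2})^2$ in Picard's criterion collapse to $\lambda_j$, and to note that density of the range of $N_\sharp$ removes the kernel-orthogonality hypothesis in Picard's criterion. I would also remark that the finite sum defines the reciprocal of a natural indicator function $z\mapsto\big(\sum_j\lambda_j^{-1}|(\Phi(\cdot\,,z),\psi_j)|^2\big)^{-1}$, which is positive inside $D$ and (numerically) vanishes outside, providing the reconstruction algorithm promised in the introduction.
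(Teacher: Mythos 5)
Your proposal is correct and follows the paper's own route: the paper likewise obtains the corollary by applying Picard's criterion (Theorem 2.7 of \cite{p1}) to the range characterization $\Phi(\cdot\,,z)\in\text{Range}\big(N_\sharp^{1/2}\big)\iff z\in D$ from Theorems \ref{maintheorem1} and \ref{maintheorem2}, using the eigensystem $(\lambda_j,\psi_j)$ of the positive, self-adjoint, compact operator $N_\sharp$. Your extra care in noting that the singular values of $N_\sharp^{1/2}$ are $\lambda_j^{1/2}$ (so the Picard denominators collapse to $\lambda_j$) and that injectivity of $N_\sharp$ makes the kernel-orthogonality condition vacuous simply fills in details the paper leaves implicit.
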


Corollary \ref{indicator1} gives a way to characterize the scatterer from the spectral data of $N_{\sharp}$ and gives that 
$$ W(z)=\left[ \sum\limits_{j=1}^{\infty} \frac{\big| \big( \Phi(\cdot \, , z ) \,,\psi_j  \big)_{L^2(C)} \big|^2}{ \lambda_j } \right]^{-1} > 0 \quad \text{if and only if } \quad z \in D .$$
So one can reconstruct $D$ by plotting the function $W(z)$ since the support of the function coincided with the scatterer. 

We are now ready show that the modified linear sampling gives an equivalent indicator function for reconstructing $D$. To this end, we define the modified near field equation
\begin{align}
N_{\sharp} \, g_z = \Phi( \cdot \, , z)  \label{MNFE}
\end{align}
and the regularized solution to \eqref{MNFE} given by 
\begin{align}
g^{\eps}_z = \sum\limits_{j=1}^{\infty} \lambda_j f_\eps (\lambda_j^2) \,  \big( \Phi(\cdot \, , z ) \,,\psi_j  \big)_{L^2(C)} \,  \psi_j \label{reg-solu}
\end{align}
where $\lambda_j  \in \R^+$ and $ \psi_j \in  L^2(C)$ be an orthonormal eigensystem of the positive, self-adjoint compact operator ${N}_{\sharp}$. The filter $f_\eps (t) : \big(0 ,\lambda_1^2 \big] \longmapsto \R^+$ for the regularization scheme is a piece-wise continuous sequence of functions that satisfies for $0 < t \leq  \lambda_1^2$
$$\lim\limits_{\eps \to 0} f_\eps(t) =\frac{1}{t} \quad \text{ and } \quad  t f_\eps(t) \leq C_{\text{reg}} \quad \text{ for all } \eps > 0$$
where $\lambda_1$ is the largest eigenvalue of the operator $N_{\sharp}$. 
Some common filter-functions are given by 
\begin{align}
f_\eps (t) =  \frac{1}{t+\eps}, \quad \quad \displaystyle{ f(t)= \left\{\begin{array}{lr} \frac{1}{t} \, \, & \, \, t>\eps  \\
 				&  \\
 0\, \,  & \, \,   t \leq \eps
 \end{array} \right.}  \quad \text{and} \quad  f_\eps (t) =\frac{1-(1-at)^{1/\eps}}{t} \label{filters}
 \end{align}
where the constant $a< \lambda_1^2$. The filter functions in \eqref{filters} are for Tiknohov's regularization, Spectral cutoff and Landweber iteration,
 respectively. Notice that by \eqref{squareroot} and \eqref{reg-solu}
$$N_{\sharp}^{1/2} g_z^{\eps} = \sum\limits_{j=1}^{\infty} \lambda^{3/2}_j f_\eps (\lambda_j^2) \,  \big( \Phi(\cdot \, , z ) \,,\psi_j  \big)_{L^2(C)} \,  \psi_j$$ 
which implies that 
\begin{align*}
\big\| N_{\sharp}^{1/2} g_z^{\eps} \big\|^2_{L^2(C)} &= \sum\limits_{j=1}^{\infty} \lambda^{3}_j \big[f_\eps (\lambda_j^2)\big]^2 \,  \Big| \big( \Phi(\cdot \, , z ) \,,\psi_j  \big)_{L^2(C)} \Big| ^2  = \sum\limits_{j=1}^{\infty} \lambda^{4}_j  \big[ f_\eps (\lambda_j^2) \big]^2 \, \frac{ \Big|  \big( \Phi(\cdot \, , z ) \,,\psi_j  \big)_{L^2(C)} \Big| ^2 }{\lambda_j} \\  
									     & \leq  C^2_{\text{reg}} \sum\limits_{j=1}^{\infty} \, \frac{ \Big|  \big( \Phi(\cdot \, , z ) \,,\psi_j  \big)_{L^2(C)} \Big| ^2 }{\lambda_j}.
\end{align*}
By Corollary \ref{indicator1} we have that $\big\| N_{\sharp}^{1/2} g_z^{\eps} \big\|_{L^2(C)}$ is bounded as $\eps$ tends to zero for all $z \in D$. We now consider the case when $z \notin D$ and notice that 
\begin{align*}
\big\| N_{\sharp}^{1/2} g_z^{\eps} \big\|^2_{L^2(C)} &= \sum\limits_{j=1}^{\infty} \lambda^{3}_j \big[f_\eps (\lambda_j^2)\big]^2 \,  \Big| \big( \Phi(\cdot \, , z ) \,,\psi_j  \big)_{L^2(C)} \Big| ^2 \\
									     & \geq \sum\limits_{j=1}^{M} \lambda^{3}_j \big[f_\eps (\lambda_j^2) \big]^2 \, \Big| \big( \Phi(\cdot \, , z ) \,,\psi_j  \big)_{L^2(C)} \Big| ^2.   
\end{align*} 
where $M \in \N$ is finite. Now, letting $\eps \to 0$ then we conclude that 				  
\begin{align*}
\lim\inf \limits_{\eps \to 0} \big\| N_{\sharp}^{1/2} g_z^{\eps} \big\|^2_{L^2(C)} \geq   \sum\limits_{j=1}^{M} \, \frac{ \Big| \big( \Phi(\cdot \, , z ) \,,\psi_j  \big)_{L^2(C)} \Big| ^2 }{\lambda_j}
\end{align*}
which implies that $\big\| N_{\sharp}^{1/2} g_z^{\eps}  \big\|$ is unbounded as $\eps$ tends to zero since the right hand side of the inequality becomes unbounded as $M$ increases for $z \notin D$.  It is clear that if $C_{\text{reg}}=1$ which is the case for the filter functions given in \eqref{filters} (for $\eps$ small) then 
$$\sum\limits_{j=1}^{M} \, \frac{ \Big| \big( \Phi(\cdot \, , z ) \,,\psi_j  \big)_{L^2(C)} \Big| ^2 }{\lambda_j}  \leq \,  \lim\limits_{\eps \to 0} \big\| N_{\sharp}^{1/2} g_z^{\eps} \big\|^2_{L^2(C)} \,  \leq \sum\limits_{j=1}^{\infty} \frac{\Big| \big( \Phi(\cdot \, , z ) \,,\psi_j  \big)_{L^2(C)} \Big|^2}{ \lambda_j }. $$ 
This gives that in the limit the modified linear sampling method's indicator given by 
$$\lim\inf\limits_{\eps \to 0} \big\| N_{\sharp}^{1/2} g_z^{\eps} \big\|^2_{L^2(C)} = \lim\inf\limits_{\eps \to 0} \big( N_{\sharp} \, g_z^{\eps}  , g_z^{\eps}  \big)_{L^2(C)}$$ 
is equivalent to the factorization method. The analysis done here follows for any scatterer where either the near field or far field operator where the factorization method is valid. Here one does not need the corresponding near field or far field operator to have an eigenvalue decomposition as in \cite{arens} and \cite{arens-armin}. The above analysis gives that following results.

\begin{theorem} \label{MLSM}
Assume that the assumptions of Theorem \ref{maintheorem1} or \ref{maintheorem2} is satisfied then the regularized solution $g_z^\eps$ of \eqref{MNFE} for all $z \in \Omega$ satisfies that 
$$\lim\inf\limits_{\eps \to 0}\big\| N_{\sharp}^{1/2} g_z^{\eps} \big\|^2_{L^2(C)} = \lim\inf\limits_{\eps \to 0} \big( N_{\sharp} g_z^{\eps}  , g_z^{\eps}  \big)_{L^2(C)}  < \infty \quad \text{if and only if } \quad z \in D. $$
\end{theorem}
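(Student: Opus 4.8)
The plan is to reduce the statement to the Picard‑type characterization already recorded in Corollary~\ref{indicator1}. First I would note the elementary identity that makes the two indicator functions literally equal for every $\eps>0$: since $N_\sharp$ is positive, self‑adjoint and compact with orthonormal eigensystem $(\lambda_j,\psi_j)$, one has
$$\big(N_\sharp g, g\big)_{L^2(C)} = \sum_{j=1}^\infty \lambda_j \big|(g,\psi_j)_{L^2(C)}\big|^2 = \big\|N_\sharp^{1/2} g\big\|^2_{L^2(C)} \qquad \text{for all } g\in L^2(C).$$
Applying this with $g=g_z^\eps$ and taking $\liminf_{\eps\to0}$ of both sides gives the asserted equality, so it only remains to prove that $\liminf_{\eps\to0}\big\|N_\sharp^{1/2} g_z^\eps\big\|^2_{L^2(C)}<\infty$ if and only if $z\in D$. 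Before doing so I would observe that for $z\in\Omega$ the restriction $\Phi(\cdot\,,z)\big|_C$ is smooth (since $z$ lies away from $C=\partial\Omega$), hence lies in $L^2(C)$, so the coefficients $(\Phi(\cdot\,,z),\psi_j)_{L^2(C)}$ and the regularized solution \eqref{reg-solu} are well defined.

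For the ``if'' direction I would insert \eqref{reg-solu} into the spectral formula \eqref{squareroot}, obtaining $N_\sharp^{1/2} g_z^\eps = \sum_j \lambda_j^{3/2} f_\eps(\lambda_j^2)(\Phi(\cdot\,,z),\psi_j)_{L^2(C)}\,\psi_j$, so that $\big\|N_\sharp^{1/2} g_z^\eps\big\|^2_{L^2(C)} = \sum_j \lambda_j^4 [f_\eps(\lambda_j^2)]^2\, \lambda_j^{-1}\big|(\Phi(\cdot\,,z),\psi_j)_{L^2(C)}\big|^2$. Using the filter bound $t f_\eps(t)\le C_{\mathrm{reg}}$ with $t=\lambda_j^2$ bounds each coefficient $\lambda_j^4[f_\eps(\lambda_j^2)]^2$ by $C_{\mathrm{reg}}^2$ uniformly in $\eps$, whence $\big\|N_\sharp^{1/2} g_z^\eps\big\|^2_{L^2(C)} \le C_{\mathrm{reg}}^2 \sum_j \big|(\Phi(\cdot\,,z),\psi_j)_{L^2(C)}\big|^2/\lambda_j$. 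Since the hypotheses are those of Theorem~\ref{maintheorem1} or \ref{maintheorem2}, for $z\in D$ we have $\Phi(\cdot\,,z)\in\text{Range}(N_\sharp^{1/2})$, so by Corollary~\ref{indicator1} the right‑hand side is finite and independent of $\eps$; the $\liminf$ is therefore finite.

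For the ``only if'' direction I would use a truncation/divergence comparison. Fixing any $M\in\N$ and discarding all but the first $M$ terms gives the lower bound $\big\|N_\sharp^{1/2} g_z^\eps\big\|^2_{L^2(C)} \ge \sum_{j=1}^{M}\lambda_j^3 [f_\eps(\lambda_j^2)]^2\big|(\Phi(\cdot\,,z),\psi_j)_{L^2(C)}\big|^2$, valid for every $\eps>0$. This is a finite sum, so I can pass $\eps\to0$ term by term, using $\lim_{\eps\to0} f_\eps(t)=1/t$ to get $\lim_{\eps\to0}\lambda_j^3[f_\eps(\lambda_j^2)]^2 = \lambda_j^3\lambda_j^{-4}=1/\lambda_j$; hence $\liminf_{\eps\to0}\big\|N_\sharp^{1/2} g_z^\eps\big\|^2_{L^2(C)} \ge \sum_{j=1}^{M}\big|(\Phi(\cdot\,,z),\psi_j)_{L^2(C)}\big|^2/\lambda_j$ for every $M$. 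For $z\notin D$, Corollary~\ref{indicator1} says the full Picard series diverges, so letting $M\to\infty$ forces $\liminf_{\eps\to0}\big\|N_\sharp^{1/2} g_z^\eps\big\|^2_{L^2(C)}=\infty$, completing the equivalence.

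I expect no genuine obstacle: the substance of the argument is exactly the computation carried out in the discussion preceding the statement, and the theorem merely packages it together with the identity $(N_\sharp g,g)_{L^2(C)}=\|N_\sharp^{1/2}g\|^2_{L^2(C)}$. The only point deserving care is the ``only if'' direction, where one must justify that the limit of each fixed finite truncation is a lower bound for the $\liminf$ of the whole quantity — this holds precisely because each truncation underestimates $\big\|N_\sharp^{1/2} g_z^\eps\big\|^2_{L^2(C)}$ for all $\eps$ — and the remark that $\Phi(\cdot\,,z)\big|_C\in L^2(C)$ for $z\in\Omega$, which keeps all series and the regularized solution meaningful.
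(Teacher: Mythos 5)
Your proposal is correct and follows essentially the same route as the paper: the identity $\big(N_\sharp g,g\big)_{L^2(C)}=\big\|N_\sharp^{1/2}g\big\|^2_{L^2(C)}$ via the spectral decomposition, the uniform upper bound from $t f_\eps(t)\le C_{\mathrm{reg}}$ combined with Corollary~\ref{indicator1} for $z\in D$, and the finite truncation with $\eps\to0$ followed by $M\to\infty$ to force divergence for $z\notin D$ — this is exactly the computation the paper carries out in the discussion preceding the theorem.
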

\begin{corollary}\label{MLSM2}
Assume that the assumptions of Theorem \ref{maintheorem1} or \ref{maintheorem2} is satisfied then the regularized solution $g_z^\eps$ of \eqref{MNFE} for all $z \in \Omega$ satisfies that 
$$\lim\inf \limits_{\eps \to 0} \big\| g_z^{\eps} \big\|_{L^2(C)} = \infty  \quad \text{ for all } \quad z \notin D. $$
\end{corollary}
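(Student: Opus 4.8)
The plan is to derive the blow-up of $\|g_z^\eps\|_{L^2(C)}$ for $z\notin D$ from the blow-up of $\big\|N_{\sharp}^{1/2}g_z^\eps\big\|_{L^2(C)}$ already established in the discussion preceding Theorem \ref{MLSM}, using nothing more than the boundedness of the operator $N_{\sharp}$. In either of the two settings (the non-absorbing case of Theorem \ref{maintheorem1} or the absorbing case of Theorem \ref{maintheorem2}) the operator $N_{\sharp}$ is positive, compact and self-adjoint, with eigenvalues $\lambda_1\ge\lambda_2\ge\cdots>0$, so its square root $N_{\sharp}^{1/2}$ defined by \eqref{squareroot} is a bounded operator on $L^2(C)$ with $\big\|N_{\sharp}^{1/2}\big\|_{L^2(C)\to L^2(C)}=\lambda_1^{1/2}>0$. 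Hence for every $\eps>0$ and every sampling point $z$,
$$\big\|N_{\sharp}^{1/2}g_z^\eps\big\|_{L^2(C)}\le \lambda_1^{1/2}\,\big\|g_z^\eps\big\|_{L^2(C)},\qquad\text{i.e.}\qquad \big\|g_z^\eps\big\|_{L^2(C)}\ge \lambda_1^{-1/2}\,\big\|N_{\sharp}^{1/2}g_z^\eps\big\|_{L^2(C)} .$$
(The same inequality can also be read off termwise from the series \eqref{reg-solu} and the corresponding series for $N_{\sharp}^{1/2}g_z^\eps$, since $\lambda_j^{3}\le\lambda_1\lambda_j^{2}$ for all $j$.)

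Next I would fix $z\notin D$ and invoke the lower bound obtained in the analysis leading to Theorem \ref{MLSM}: for every finite $M\in\N$,
$$\liminf_{\eps\to0}\big\|N_{\sharp}^{1/2}g_z^\eps\big\|^2_{L^2(C)}\ \ge\ \sum_{j=1}^{M}\frac{\big|\big(\Phi(\cdot\,,z),\psi_j\big)_{L^2(C)}\big|^2}{\lambda_j}.$$
Since $z\notin D$, Corollary \ref{indicator1} (Picard's criterion) tells us that the full series $\sum_{j\ge1}\big|\big(\Phi(\cdot\,,z),\psi_j\big)_{L^2(C)}\big|^2/\lambda_j$ diverges; as the left-hand side above does not depend on $M$, letting $M\to\infty$ forces $\liminf_{\eps\to0}\big\|N_{\sharp}^{1/2}g_z^\eps\big\|_{L^2(C)}=\infty$. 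Combining this with the inequality from the first paragraph,
$$\liminf_{\eps\to0}\big\|g_z^\eps\big\|_{L^2(C)}\ \ge\ \lambda_1^{-1/2}\,\liminf_{\eps\to0}\big\|N_{\sharp}^{1/2}g_z^\eps\big\|_{L^2(C)}\ =\ \infty,$$
which is exactly the assertion of the corollary for $z\notin D$.

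There is essentially no substantial obstacle: the argument is a one-line consequence of the boundedness of $N_{\sharp}^{1/2}$ together with the divergence already proved for $\big\|N_{\sharp}^{1/2}g_z^\eps\big\|$. The only point requiring a little care is to note that both cases of Theorems \ref{maintheorem1}–\ref{maintheorem2} produce an operator $N_{\sharp}$ that is positive, compact, self-adjoint with dense range (so that $\lambda_1>0$ and $N_{\sharp}^{1/2}$ is well defined and bounded with norm $\lambda_1^{1/2}$), after which the computation above applies verbatim in each case; alternatively, one may bypass $N_{\sharp}^{1/2}$ entirely and argue directly from \eqref{reg-solu}, bounding $\|g_z^\eps\|^2$ below by a finite partial sum that converges, as $\eps\to0$, to $\sum_{j=1}^{M}\big|\big(\Phi(\cdot\,,z),\psi_j\big)_{L^2(C)}\big|^2/\lambda_j^2$, which also diverges as $M\to\infty$ because $\lambda_j\to0$.
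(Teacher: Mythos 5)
Your proposal is correct and follows essentially the same route as the paper: the paper states Corollary \ref{MLSM2} as a direct consequence of the analysis preceding Theorem \ref{MLSM}, namely the lower bound $\liminf_{\eps\to0}\|N_{\sharp}^{1/2}g_z^{\eps}\|^2_{L^2(C)}\geq\sum_{j=1}^{M}|(\Phi(\cdot\,,z),\psi_j)|^2/\lambda_j$ together with Picard's criterion (Corollary \ref{indicator1}) for $z\notin D$, and the transfer to $\|g_z^{\eps}\|_{L^2(C)}$ via the boundedness of $N_{\sharp}^{1/2}$ (equivalently, your direct estimate from \eqref{reg-solu}) is exactly the intended step.
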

Corollary \ref{MLSM2} gives that any regularized solution to \eqref{MNFE} becomes unbounded as the regularization parameter $\eps$ tends to zero. In practice the indicator function for the modified linear sampling method can be taken to be either 
$$ P(z) = \big\| N_{\sharp}^{1/2} g_z^{\eps} \big\|^{-1}_{L^2(C)} \quad \text{ or } \quad I(z)=\big\|  g_z^{\eps} \big\|^{-1}_{L^2(C)}$$
where the support of these functions can be used to reconstruct the scatterer $D$.  
}

Due to the rigorous range characterization given by the factorization method, we have the following uniqueness result for the inverse problem. 

\begin{corollary}[Uniqueness for the Inverse Problem]
Assume that  $D_1$ and $D_2$ are two penetrable anisotropic obstacles having material properties $A_1$, $n_1$  and $A_2$, $n_2$ that satisfy the assumptions for either Theorem \ref{maintheorem1} or \ref{maintheorem2} in the interior of $D_1$ and $D_2$, respectively. If the corresponding scattered fields are such that $u^s_1(x,y)=u^s_2(x,y)$ for all $x, \, y \in C$ for a fixed wave number $k$ (that satisfies assumption \ref{note}) then $D_1$ = $D_2$.
\end{corollary}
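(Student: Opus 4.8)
The plan is to exploit the rigorous range characterization of Theorems \ref{maintheorem1} and \ref{maintheorem2}, whose content is that the support of the scatterer is encoded in an operator assembled entirely from the measured near field data. First I would observe that the data set $\{u^s(x,y): x,y\in C\}$ determines the near field operator $N:L^2(C)\to L^2(C)$, $(Ng)(x)=\int_C u^s(x,y)g(y)\,ds_y$, uniquely. Hence the hypothesis $u^s_1(x,y)=u^s_2(x,y)$ for all $x,y\in C$ gives $N_1=N_2=:N$, and therefore $\Re(N)=(N+N^*)/2$ and $\Im(N)=(N-N^*)/(2\mathrm i)$ coincide for the two obstacles as well. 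Consequently the auxiliary operator $N_\sharp$ --- equal to $|\Re(N)|+|\Im(N)|$ in the non-absorbing case of Theorem \ref{maintheorem1} and to $-\Im(N)$ in the absorbing case of Theorem \ref{maintheorem2} --- is one and the same positive, self-adjoint, compact operator whether it is built from the $D_1$-data or the $D_2$-data; in particular $\mathrm{Range}\big(N_\sharp^{1/2}\big)$ is a fixed subspace of $L^2(C)$.

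Next I would apply the appropriate range test to each obstacle separately. Here $\Omega$ is determined by the measurement curve, $C=\partial\Omega$, and by assumption $\overline{D_1},\overline{D_2}\subset\Omega$; moreover $k$ satisfies Assumption \ref{note} for each of $(A_1,n_1,D_1)$ and $(A_2,n_2,D_2)$. Thus for every sampling point $z\in\Omega$, Theorem \ref{maintheorem1} (resp. \ref{maintheorem2}), invoked with the material parameters of $D_j$, yields
$$\Phi(\cdot\,,z)\in\mathrm{Range}\big(N_\sharp^{1/2}\big)\iff z\in D_j,\qquad j=1,2.$$
Since the left-hand side is independent of $j$ by the first paragraph, we conclude $z\in D_1\iff z\in D_2$ for every $z\in\Omega$, hence $D_1\cap\Omega=D_2\cap\Omega$, and since both obstacles lie in $\Omega$ this gives $D_1=D_2$. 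Equivalently, one may phrase this through the indicator $W$ of Corollary \ref{indicator1}, which is also fixed by the data: $D_j=\{z\in\Omega: W(z)>0\}$, so the two supports coincide.

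The only real subtlety --- the step I expect to need the most care --- is ensuring that the two characterizations genuinely refer to the \emph{same} operator $N_\sharp$. This is automatic when both obstacles fall under the same theorem; and even when the hypotheses place them under different theorems one uses that in both formulas $N_\sharp$ is manufactured only from $\Re(N)$ and $\Im(N)$, which the data fixes, so the relevant range is still common. Beyond that, the verification is routine: one checks that the stated sign/definiteness conditions on $A_j-I$, $n_j-1$ (non-absorbing case) or on $\Im(A_j)$, $\Im(n_j)$ (absorbing case), together with Assumption \ref{note}, indeed place us in the scope of Theorem \ref{maintheorem1} or \ref{maintheorem2}; once each $D_j$ is identified with $\{z\in\Omega:\Phi(\cdot\,,z)\in\mathrm{Range}(N_\sharp^{1/2})\}$, the equality $D_1=D_2$ is immediate. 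No equation needs to be solved and no coefficients need be compared --- which is exactly the point of the factorization method for anisotropic media, where the matrix coefficient itself is not uniquely determined by the data.
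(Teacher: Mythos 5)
Your argument is correct and is essentially the paper's own: the paper offers no written proof beyond invoking ``the rigorous range characterization given by the factorization method,'' which is exactly what you spell out --- equal data gives the same $N$, hence the same $N_\sharp$ and the same range test $\Phi(\cdot\,,z)\in\mathrm{Range}\bigl(N_\sharp^{1/2}\bigr)\iff z\in D_j$, forcing $D_1=D_2$. The only place you overreach slightly is the mixed case (one obstacle absorbing, the other not), where Theorems \ref{maintheorem1} and \ref{maintheorem2} use \emph{different} operators $N_\sharp$ built from the same $N$, so the two square-root ranges are not literally ``the same subspace'' and that case would need an extra word (or exclusion) --- though the paper's one-line justification is no more careful on this point than you are.
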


\noindent{\bf A numerical examples for unit ball in $\R^2$:} We will now give an explicit example of Theorem \ref{maintheorem1}
{
and \ref{MLSM}
where we assume that the coefficients are isotropic homogeneous in a disk of radius one. This is a simple example to numerically see the validity of the equivalence between the factorization method and the modified linear sampling method. This is not an extensive numerical study but we give this numerical evidence to illustrate the analysis in this section.
}
Therefore, the coefficient matrix is given by $A=a I$ where both $a$ and $n$ are constant. Now notice that the scattering problem \eqref{forwardprob} can be written as find $u\in H^1(B_1)$ and $u^s \in H^1_{loc}(\R^2 \setminus \overline{B}_1)$ such that 
\begin{eqnarray*}
a \Delta u +k^2 n u=0 \, \, \text{in } \, \, B_1\quad \text{and} \quad  \Delta u^s + k^2 u^s=0  \, \, \, &\textrm{ in }& \,\,  \R^2 \setminus \overline{B}_1 \\
u^s-u= - \overline{\Phi(x,y)} \quad  \textrm{ and }  \quad  \frac{\partial u^s}{\partial r} - a \frac{\partial u}{\partial r}= - \frac{\partial }{\partial r} \overline{\Phi(x,y)} \quad &\textrm{ on }& \partial B_1 ,
\end{eqnarray*}
where $u$ is the total field in $B_1$ and $u^s$ is the radiating scattered field in $\R^2 \setminus \overline{B}_1$. We make the ansatz that the solutions can be written as the following series 
$$u^s(x,y)=\sum\limits_{|m|=0}^{\infty} \alpha_m H^{(1)}_m( k |x|) \text{e}^{ \text{i} m\hat{x}} \quad \text{and} \quad u(x,y)=\sum\limits_{|m|=0}^{\infty} \beta_m \text{J}_m\left(k \sqrt{\frac{n}{a}} |x| \right)\text{e}^{\text{i}m\hat{x}}.$$
Here we adopt the notation that points in $\R^2$ are given in by the polar representation 
$$ x= |x| \big( \cos (\hat{x} ) , \sin(\hat{x}) \, \big)^{\top},$$ 
where $ \hat{x} \in [0 , 2\pi)$. 
Recall that for $|y| > |x|$ the fundamental solution has the expansion 
$$\Phi(x,y)=\frac{\text{i}}{4} \sum\limits_{|m|=0}^{\infty}  H^{(1)}_m( k |y|) \text{J}_m\left(k |x| \right) \text{e}^{\text{i}m(\hat{y}-\hat{x})}$$
(see for e.g. \cite{coltonkress}) where $\text{J}_m$ is the Bessel function and $H^{(1)}_m$ is the first kind Hankel function of order $m$. We will assume that $C=\partial B_2$ and therefore applying the boundary conditions we obtain that 

$$\begin{bmatrix}
  H^{(1)}_m( k ) & -\text{J}_m\left(k \sqrt{\frac{n}{a}}  \right) \\
  ( H^{(1)}_m(k))' & -\sqrt{n a}\text{J}{'}_m\left(k \sqrt{\frac{n}{a}}\right)
\end{bmatrix} 
\begin{bmatrix}
  \alpha_m \\
  \beta_m \end{bmatrix}= 
  \frac{\text{i}}{4} H^{(2)}_m( 2 k ) \text{e}^{-\text{i}m\hat{y}} 
\begin{bmatrix}
 \text{J}_m\left(k\right) \\
  \text{J}{'}_m (k ) \end{bmatrix}.
$$
Solving the linear system gives that 
$$\alpha_m =\frac{\text{i}}{4} H^{(2)}_m(2k) \text{e}^{-\text{i}m\hat{y}}  \frac{ \text{J}_m\left(k \sqrt{\frac{n}{a}}\right)\text{J}{'}_m(k)-\sqrt{n a}\text{J}{'}_m\left(k \sqrt{\frac{n}{a}}\right)\text{J}_m(k) }{ \text{J}_m\left(k \sqrt{\frac{n}{a}}\right) \left( H^{(1)}_m(k)\right)'-\sqrt{n a}\text{J}{'}_m\left(k \sqrt{\frac{n}{a}}\right)H^{(1)}_m(k) }$$
with $H^{(2)}_m$ being the second kind Hankel function of order $m$. Simple manipulations give that the scattered field for $x , y \in \partial B_2$ can be written as 
$$ u^s(x,y) = \frac{\text{i}}{4} \sum\limits_{|m|=0}^{\infty} \sigma_m \big| H^{(1)}_m(2k) \big|^2 \text{e}^{\text{i}m(\hat{x} -\hat{y})} ,$$
where the polar angles $ \hat{x} ,\hat{y} \in [0 , 2\pi)$ and we define 
$$ \sigma_m= \frac{ \text{J}_m\left(k \sqrt{\frac{n}{a}}\right)\text{J}{'}_m(k)-\sqrt{n a}\text{J}{'}_m\left(k \sqrt{\frac{n}{a}}\right)\text{J}_m(k) }{ \text{J}_m\left(k \sqrt{\frac{n}{a}}\right) \left( H^{(1)}_m(k)\right)'-\sqrt{n a}\text{J}{'}_m\left(k \sqrt{\frac{n}{a}}\right)H^{(1)}_m(k) }.$$

Given the above expression for the scattered field we have that the near field operator can be written as $N : L^2(0 , 2\pi) \longmapsto L^2(0 , 2\pi)$ 
$$(Ng)(\hat{x}) = \frac{\text{i}}{4} \int_0^{2 \pi} \sum\limits_{|m|=0}^{\infty} \sigma_m \big| H^{(1)}_m(2k) \big|^2 \text{e}^{\text{i}m(\hat{x} -\hat{y})}  \, g(\hat{y}) \, d\hat{y}.$$
In our examples we approximate the near field operator by truncating the series. Let $N_M$ be the truncated approximation of the near field operator for some $M \in \N$ given by 
$$(N_M g)(\hat{x}) = \frac{\text{i}}{4} \int_0^{2 \pi} \sum\limits_{|m|=0}^{M} \sigma_m \big| H^{(1)}_m(2k) \big|^2 \text{e}^{\text{i}m(\hat{x} -\hat{y})}  \, g(\hat{y}) \, d\hat{y}.$$
We apply Theorem \ref{maintheorem1} and \ref{MLSM} to the discretized version of the truncated near field operator ${\bf N}_M$ where a simple 64 point Riemann sum approximation is used to discretize the integral. 
Therefore we will plot the function 
{
$$W_M (z)=\left[ \sum\limits_{i=1}^{64} \frac{ \big|( \phi_ z \,,\psi^{M}_i)_{\ell^2} \big|^2}{ \lambda^M_i } \right]^{-1} \quad \text{for} \quad \phi_ z =  \Phi(\cdot \,  , z) $$
where $\lambda^M_i$ and $\psi^M_i$ are the eigenvalues and vectors of the matrix
$$\big( {\bf N}_M \big)_\sharp = \big|\Re\big({\bf N}_M\big)\big| +\big|\Im\big({\bf N}_M\big)\big|.$$ 
Now, let the vector $g_z^\eps$ be the solution to the discretized modified near field equation by a spectral cutoff where we use the numerical rank of the matrix as the cutoff parameter with indicator function 
$$P_M (z) = \Big| \Big( \big( {\bf N}_M \big)_\sharp\,  g_z^\eps , g_z^\eps \Big)_{\ell^2} \Big|^{-1}.$$
In the following examples we plot the functions $W_M (z)$ and $P_M (z)$ with $M=20$. 
}
 
\begin{figure}[H]
\hspace{-1in}\includegraphics[scale=0.45]{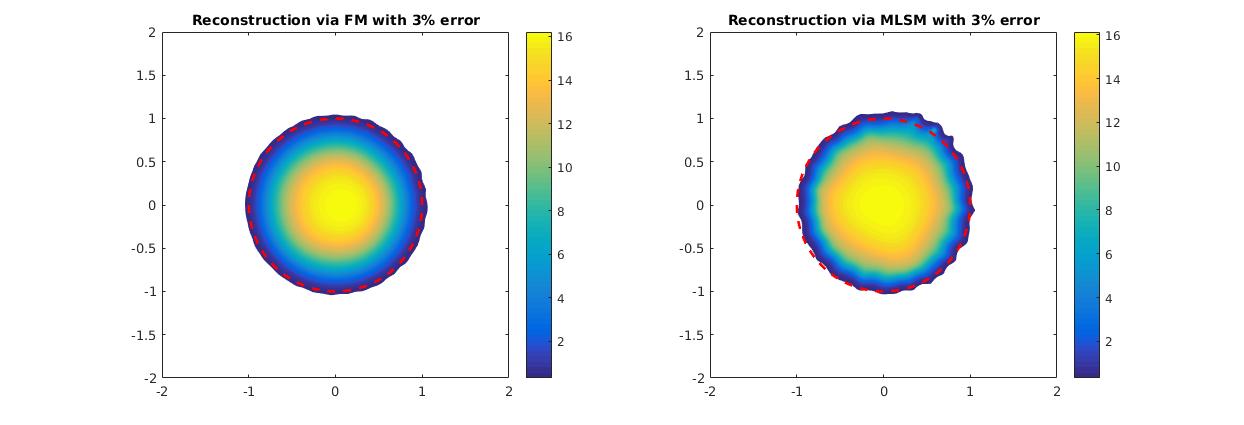}
\caption{ Reconstruction via the factorization and modified linear sampling methods where $A=(1/2)I$ and $n=5$. The dotted red line is the actual boundary of the unit circle.}
\end{figure}
 
\begin{figure}[H]
\hspace{-1in}\includegraphics[scale=0.45]{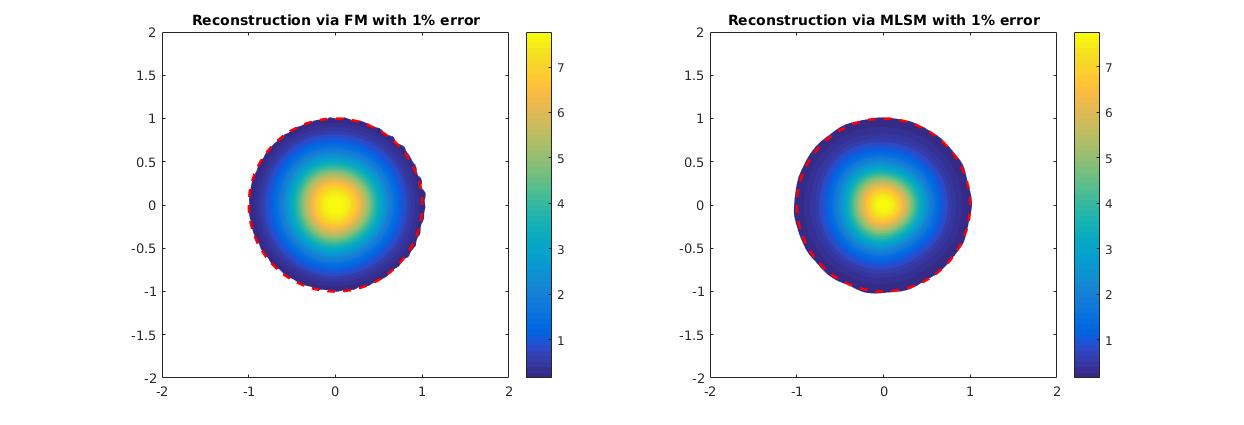}
\caption{Reconstruction via the factorization and modified linear sampling methods where $A=(3-\text{i})I$ and $n=1/4 +2 \text{i}$. The dotted line is the actual boundary of the unit circle.}
\end{figure}
  
\appendix
\section{Abstract theory of the factorization method}
We now recall the main theoretical results from \cite{armin} which we use to develop the range test in the previous section. To this end, let $X \subset U \subset X^*$ be a Gelfand triple with a Hilbert space $U$ and a reflexive Banach space $X$ such that the embedding is dense. Furthermore, let $Y$ be a second Hilbert space and let ${N}: Y \mapsto Y$, $H: Y \mapsto X$ and $T: X \mapsto X^*$ be linear bounded operators such that $N=H^*TH$.
\begin{theorem}(Theorem 2.1 in \cite{armin}) \label{fmthm2}
Assume that 
\begin{enumerate}
\item $H$ is compact and injective.
\item  $\Re(T)$ is the sum of a compact operator and a self adjoint coercive operator. 
\item $\Im(T)$ is non-negative(or non-positive) on $X$.\\
\hspace*{-1.2cm} Moreover assume that either of the following is satisfied:
\item $T$ is injective. 
\item $\Im(T)$ is strictly positive(or negative) on the null space of $\Re (T)$.
\end{enumerate}
Then the operator $N_{\sharp}= |\Re(\text{e}^{\text{i}t} N)| +|\Im(N)|$ is positive, and the range of the operators $H^*: X^* \mapsto Y$ and $N_{\sharp}^{1/2} : Y \mapsto Y$ coincide. 
\end{theorem}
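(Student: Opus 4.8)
The plan is to reduce the statement to an abstract range identity and then build, out of the factorization $N=H^{*}TH$, a second factorization in which the middle operator is honestly positive and coercive. First I would record the elementary fact underlying the whole method: if $A$ is a bounded operator between Hilbert spaces, then $\text{Range}(A)=\text{Range}\big((AA^{*})^{1/2}\big)$, and more generally, if the coercive self-adjoint operator $T_\sharp:X\to X^{*}$ is produced (see below), then Lax--Milgram gives an isomorphism $B_{0}:X\to U$ with $\langle T_\sharp\varphi,\psi\rangle=(B_{0}\varphi,B_{0}\psi)_{U}$, i.e. $T_\sharp=B_{0}^{*}B_{0}$; setting $A:=B_{0}H:Y\to U$ (compact and injective by assumption~1), one gets $\text{Range}(H^{*})=\text{Range}(A^{*})=\text{Range}\big((AA^{*})^{1/2}\big)$. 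Since $H$ is compact, $N$ and hence $N_\sharp=|\Re(e^{\text{i}t}N)|+|\Im(N)|$ are compact self-adjoint, so $N_\sharp^{1/2}$ makes sense via the spectral theorem, and the whole game is to identify $\text{Range}(N_\sharp^{1/2})$ with $\text{Range}(A^{*})$.

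The middle operator $T$ is not itself positive or coercive: by assumption~2, $\Re(T)$ is coercive only up to a compact perturbation, so it is self-adjoint Fredholm of index zero but possibly indefinite with a finite-dimensional kernel, and by assumption~3 the part $\Im(T)$ is only sign-semidefinite. To repair this I would introduce the rotation parameter and set $T_\sharp:=|\Re(e^{\text{i}t}T)|+|\Im(T)|$, where $|\cdot|$ of a self-adjoint index-zero operator $X\to X^{*}$ is defined through the spectral splitting $X=X_{+}\oplus X_{-}\oplus\ker$. One shows that for a suitably chosen $t\in\R$ the operator $T_\sharp$ is coercive: on $X_{\pm}$ the term $|\Re(e^{\text{i}t}T)|$ is already coercive (the negative subspace $X_{-}$ being finite-dimensional), and on $\ker\Re(e^{\text{i}t}T)$ one invokes assumption~4 ($T$ injective) or assumption~5 ($\Im(T)$ definite on $\ker\Re(T)$), combined with a compactness/contradiction argument, to bound $|\Im(T)|$ below there; the role of the rotation is precisely to trade a controlled amount of the sign-definite imaginary part against directions where $\Re(T)$ is small. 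The same bookkeeping, together with injectivity of $H$, shows that $N_\sharp$ is positive.

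The genuine difficulty is that taking absolute values does not commute with conjugation by $H$: in general $|H^{*}SH|\neq H^{*}|S|H$, so $N_\sharp$ is \emph{not} literally $H^{*}T_\sharp H$, and the clean factorization argument of the first paragraph cannot be applied verbatim. I would handle this as in the Kirsch--Grinberg machinery: treat the self-adjoint piece $B:=\Re(e^{\text{i}t}N)=H^{*}\Re(e^{\text{i}t}T)H$ on its own, decompose it into positive and negative spectral parts $B=B_{+}-B_{-}$ with $B_{-}$ of \emph{finite} rank (since the negative spectrum of the middle operator is finite-dimensional), and use Douglas' range-inclusion lemma to show that the finite-rank defect $B_{-}$ does not change the square-root range, so that $\text{Range}(|B|^{1/2})=\text{Range}(H^{*})$ already; adding the non-negative contribution of $|\Im(N)|$ only reinforces the lower bound. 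Equivalently, one establishes two-sided quadratic-form inequalities of the type $c_{1}\,(H^{*}T_\sharp H\,x,x)\le(N_\sharp x,x)\le c_{2}\,(H^{*}T_\sharp H\,x,x)$ for all $x\in Y$, and then the range identity from the first paragraph closes the proof. I expect this last step --- carefully tracking the non-commutativity of $|\cdot|$ with the factorization and the finitely many ``bad'' spectral directions --- to be the main obstacle; the remaining ingredients are soft functional analysis already essentially contained in Theorems~\ref{rangeh} and \ref{rangeh1} for the concrete operators at hand.
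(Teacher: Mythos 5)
A preliminary remark: the paper itself contains no proof of this statement --- it is quoted verbatim from Theorem 2.1 of \cite{armin} (closely related to Theorem 2.15 of \cite{kirschbook}) as an appendix tool --- so your proposal has to be measured against the proof in that reference. Your first paragraph (the Lax--Milgram factorization $T_0=B_0^*B_0$ of a coercive self-adjoint middle operator and the identity $\text{Range}\big((A^*A)^{1/2}\big)=\text{Range}(A^*)$) and your observation that the negative spectral part is finite-dimensional are indeed ingredients of that proof. The gap is in how you resolve the non-commutativity of $|\cdot|$ with the factorization, which you correctly identify as the crux but then dispose of incorrectly. First, the claim that $\text{Range}\big(|\Re(\text{e}^{\text{i}t}N)|^{1/2}\big)=\text{Range}(H^*)$ ``already'', with $|\Im(N)|$ merely ``reinforcing the lower bound'', is unjustified and in general false: assumptions 4--5 do not make $\Re(\text{e}^{\text{i}t}T)$ injective (it is only Fredholm of index zero), and when its kernel meets the closure of $\text{Range}(H)$ the operator $|\Re(\text{e}^{\text{i}t}N)|^{1/2}$ misses part of $\text{Range}(H^*)$. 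This degenerate situation --- interior transmission eigenvalues in the application --- is precisely what the theorem of \cite{armin} is designed to cover, and it is exactly where $|\Im(N)|$ together with assumption 4 or 5 must do real work. Second, the fallback you propose, a two-sided form equivalence $c_1(H^*T_\sharp H x,x)\le (N_\sharp x,x)\le c_2(H^*T_\sharp Hx,x)$, is asserted rather than derived: it does not follow from any general compatibility of $|\cdot|$ with conjugation --- already for $2\times 2$ matrices, with $S=\text{diag}(1,-1)$ and $M=\bigl(\begin{smallmatrix}1&1\\0&1\end{smallmatrix}\bigr)$ one has $(|M^*SM|e_1,e_1)>(M^*|S|Me_1,e_1)$ --- and for compact operators the existence of such constants encodes essentially the range identity you are trying to prove. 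Third, defining $T_\sharp=|\Re(\text{e}^{\text{i}t}T)|+|\Im(T)|$ by spectral splitting is not available in the stated generality ($X$ is only a reflexive Banach space and $\Re(T):X\to X^*$), and $t$ is not a free parameter to be optimized in the proof; it must be the one for which the hypotheses on the middle operator hold.

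The missing idea, which is how the cited proof actually closes the argument, is to take absolute values only on $Y$ and then restore a genuine second factorization. Writing $\Re(\text{e}^{\text{i}t}N)=H^*\Re(\text{e}^{\text{i}t}T)H$ and $|\Re(\text{e}^{\text{i}t}N)|=\Re(\text{e}^{\text{i}t}N)+2B_-$ with $B_-$ the finite-rank negative part, every eigenvector of $\Re(\text{e}^{\text{i}t}N)$ for a nonzero eigenvalue satisfies $\psi=H^*\big(\lambda^{-1}\Re(\text{e}^{\text{i}t}T)H\psi\big)\in\text{Range}(H^*)$, so $B_-=H^*KH$ for a finite-rank $K:X\to X^*$; since $\Im(T)$ is semidefinite, $|\Im(N)|=\pm H^*\Im(T)H$ as well. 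Hence $N_\sharp=H^*\widetilde{T}H$ with $\widetilde{T}=\Re(\text{e}^{\text{i}t}T)+2K\pm\Im(T)$, and the remaining --- genuinely technical --- step, where assumptions 2--5 all enter through a compactness/contradiction argument, is to prove coercivity of $\widetilde{T}$ (on the relevant subspace), after which your first-paragraph lemma yields both positivity of $N_\sharp$ and $\text{Range}(N_\sharp^{1/2})=\text{Range}(H^*)$. Without producing such a factorization of $N_\sharp$ itself (or actually establishing the form comparison you postulate), your argument does not close.
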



\end{document}